\def\bbbr{{\mathbb R}}
\def\bbbc{{\mathbb C}}
\def\bbbq{{\mathbb Q}}
\def\bbbp{{\mathbb P}}
\def\bbbz{{\mathbb Z}}
\def\cal#1{{\mathcal #1}}
\def\hat{\widehat}
\numberwithin{equation}{section}
\def\half{\nicefrac{1}{2}}
\def\is{\equiv}
\def\Re{{\rm Re}}
\def\Im{{\rm Im}}
\definecolor{lightgrey}{rgb}{0.8, 0.84, 0.8}
\newtheorem{theorem}[subsection]{Theorem}
\newtheorem{lemma}[subsection]{Lemma}
\newtheorem{corollary}[subsection]{Corollary}
\newtheorem{proposition}[subsection]{Proposition}
\newtheorem{remark}[subsection]{Remark}
\newtheorem{example}[subsection]{Example}
\title{$\Gamma$-evaluations of hypergeometric series}
\author{Frits Beukers, Jens Forsg\r{a}rd} 
\address{Utrecht University }
\email{f.beukers@uu.nl, jensforsgard@gmail.com}
\thanks{
This work was supported by the Netherlands Organisation for Scientific Research (NWO),
grant TOP1EW.15.313.}
\begin{document}
\maketitle

%\tableofcontents
\section{Introduction}
Let $a,b,c\in\bbbc$ such that $c\not\in\bbbz_{\le0}$. The Gauss hypergeometric function
$F(a,b,c\,|\,z)$ is defined by the power series expansion
$$\sum_{n=0}^\infty\frac{(a)_n(b)_n}{(c)_n n!}\ z^n.$$
This power series converges in the complex disc $\,|\,z\,|\,<1$. When $\Re(c-a-b)>0$ the series
also converges on $\,|\,z\,|\,=1$. Note that if the $a$ or $b$ parameter is a negative integer
then $F(a,b,c\,|\,z)$ is a polynomial. There are no convergence issues in that case.

In the classical literature on hypergeometric functions we find many instances of special
evaluation of a hypergeometric function at specific arguments. The best known evaluation
is due to Gauss,
$$F(a,b,c\,|\,1)=\frac{\Gamma(c)\Gamma(c-a-b)}{\Gamma(c-a)\Gamma(c-b)}.$$
The left hand side converges only if $\Re(c-a-b)>0$. Another example is Kummer's evaluation
$$F(a,b,a-b+1\,|\,-1)=\frac{1}{2}\frac{\Gamma(\nicefrac{a}{2})\Gamma(a-b+1)}{\Gamma(a)\Gamma(\nicefrac a2-b+1)}.$$
From this one can deduce two others, as shown by Bailey in \cite[p.\ 11]{Ba}. The first is
$$F\big(2a,2b,a+b+\nicefrac12\,|\,\nicefrac12\big)=\frac{\Gamma(\nicefrac12)\Gamma(a+b+\nicefrac12)}{\Gamma(a+\nicefrac12)\Gamma(b+\nicefrac12)},$$
attributed to Gauss, and the second is
$$F\big(a,1-a,c\,|\,\nicefrac12\big)=\frac{\Gamma(\nicefrac c2)\Gamma(\nicefrac {(c+1)}2)}
{\Gamma(\nicefrac{(c+a)}2)\Gamma(\nicefrac{(1+c-a)}2)}.$$
There is a related evaluation
$$F(2a+1,b,2b\,|\,2)=\frac{\Gamma(-a)\Gamma(\nicefrac12+b)}{\Gamma(\nicefrac12)\Gamma(-a+b)}
\times\frac{1-e^{2\pi ia}}{2}.$$
However, for the moment this is only well-defined when $2a+1\in\bbbz_{\le0}$
and $2b\not\in\bbbz_{\le0}$, since in that case the left hand side is a finite sum.

The above examples contain 3 or 2 degrees of freedom in their parameters.
It turns out that there exists a very extensive list of one parameter evaluations.
As an example we quote from Bateman's \cite[2.8(53)]{erdelyi},
$$F\big(-a,-a+\nicefrac12,2a+\nicefrac32\,|\,-\nicefrac13\big)=\left(\frac{8}{9}\right)^{2a}\frac{\Gamma(2a+\nicefrac32)\Gamma(\nicefrac43)}
{\Gamma(2a+\nicefrac43)\Gamma(\nicefrac32)}.$$
These evaluations take place at fixed arguments and the values are a product of values of
$\Gamma$-functions times an exponential function times, possibly, a periodic function
in the hypergeometric parameters. In the literature they are sometimes
called "strange evaluation", we prefer the more descriptive name
$\Gamma${\it -evaluations}. 

The first systematic study that we are aware of is from W.Heyman
in 1899, \cite{Hey}. There we find a collection of $\Gamma$-evaluations
obtained by using the contiguity property for hypergeometric functions.
We also cite \cite{GS} from 1982 and \cite{S98} from 1998, which includes 
special evaluations for higher order hypergeometric functions as well.
The evaluations are often in polynomial form, by which we 
mean that one of the first two hypergeometric parameters is a negative integer.
The development of computer algebra methods made it possible
to automatize the search for $\Gamma$-evaluations.
See for example \cite{Ge} and the remarkable manuscript
\cite{ekhad} containing 40 $\Gamma$-evaluations discovered around 2004 by Shalosh Ekhad,
Doron Zeilberger's tireless computer. One more or less random example of such a $\Gamma$-evaluation is 
$$F\big(2t,t+\nicefrac13,\nicefrac43\,|\,-8\big)=\frac{2\cos\big(\pi(t+\nicefrac13)\big)}{27^t}\frac{\Gamma(t-\nicefrac16)\Gamma(\nicefrac12)}{\Gamma(t+\nicefrac12)\Gamma(-\nicefrac16)}$$
which can be found in \cite[(3.7)]{GS} when $t\in-\nicefrac13+\bbbz_{\le0}$ and additionally in 
\cite[4.3.2(xxi)]{ebisu} when $2t\in\bbbz_{\le0}$.
In this paper we show that it holds for arbitrary $t$.

The inspiration for the present paper comes from Akihito Ebisu's remarkable AMS Memoir \cite{ebisu}, in which the author develops a systematic method to find $\Gamma$-evaluations of Gaussian hypergeometric functions. As in Heymann's work,
the main tool in this study is the contiguity property of hypergeometric
functions. After explanation of this
idea, Ebisu produces a long list of sample $\Gamma$-evaluations, either in the finite form,
with one $a$ or $b$ parameter in $\bbbz_{\le0}$, or an interpolated version which holds fo
all parameter values $t$. We have adapted Ebisu's approach, which very briefly comes down to the following.

Consider a triple of hypergeometric parameters $a,b,c$ and abbreviate it by $\beta:=(a,b,c)$. We denote
$F(\beta\,|\,z):=F(a,b,c\,|\,z)$. Let $k,l,m$ be a triple of integers, which we denote as $\gamma:=(k,l,m)$,
the {\it shift vector}. Using contiguity relations we can find
rational functions $R_\gamma(\beta,z)$ and $Q_\gamma(\beta,z)$ in $\bbbq(a,b,c,z)$
such that
$$F(\beta+\gamma\,|\,z)=R_\gamma(\beta,z)F(\beta\,|\,z)+Q_\gamma(\beta,z)F'(\beta\,|\,z).$$
A quadruple $(\beta,z_0):=(a,b,c,z_0)$ is called {\it admissible} with respect to $\gamma$
if $Q_\gamma(\beta+t\gamma,z_0)=0$
for all $t\in\bbbc$. 
Choose an admissible quadruple $(\beta,z_0)$. We then obtain the functional equation
\begin{equation}\label{functional}
F(\beta+(t+1)\gamma\,|\,z_0)=R_\gamma(\beta+t\gamma,z_0) F(\beta+t\gamma\,|\,z_0)
\end{equation}
for $F(\beta+t\gamma\,|\,z_0)$ as function of $t$. Suppose that 
$$R_\gamma(\beta+t\gamma,z_0)=R_0\prod_{i=1}^r\frac{t+\alpha_i}{t+\delta_i},\quad R_0\in\bbbc^\times.$$
Then observe that $R_0^t\prod_{i=1}^r\frac{\Gamma(t+\alpha_i)}{\Gamma(t+\delta_i)}$
satisfies the same functional equation as $F(\beta+t\gamma\,|\,z_0)$. All we need to
do is identify these two functions of $t$.
This is done in Theorem \ref{main}, which is our main result.
From Theorem \ref{main} we can deduce
interpolated versions of $\Gamma$-evaluations which occured only in finite
form in earlier publications.

Since in many of the latter cases the argument is outside the
disc of convergence we need to extend the evaluations of $F(a,b,c\,|\,z)$ to $z$
outside the unit disc. 

The sum $F(a,b,c\,|\,z)$ can be continued analytically to $\bbbc\setminus[1,\infty)$
using Euler's integral
$$
F(a,b,c\,|\,z)=\frac{\Gamma(c)}{\Gamma(b)\Gamma(c-b)}\int_0^1\frac{x^{b-1}(1-x)^{c-b-1}}{(1-zx)^{a}}\ dx.
$$
In the integrand we choose $x^b=\exp(b\log|x|)$ and $(1-x)^{c-b}=\exp((c-b)\log|1-x|)$,
and we define $(1-zx)^a$ using the choice $|\arg(1-zx)|<\pi$.
Note that this integral only converges at the points $0$ and $1$ if $\Re(b)$ and $\Re(c-b)$
are positive. To get an integral without these restrictions
one can replace the path of integration $[0,1]$ by the so-called Pochhammer
contour $C$:

\centerline{\includegraphics{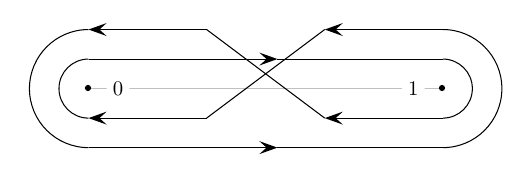}}

and division of the integral by $(e^{-2\pi ib}-1)(e^{2\pi i(c-b)}-1)$.
The four
horizontal piecewise linear paths should be thought of as four copies of the real segment 
$[\delta,1-\delta]$ and the rounded parts as the circles $|z|=\delta$ and $|z-1|=\delta$
for some small $\delta>0$. 
We have taken the argument of the integrand on the bottom line segment to
be given as above. 
For the evaluation of $F(a,b,c\,|\,z)$ at $z\in(1,\infty)$ we make the choice 
$\lim_{\epsilon\downarrow0}F(a,b,c\,|\,z+\epsilon i)$.
Its value is now given by the Euler integral over the arc

\centerline{\includegraphics{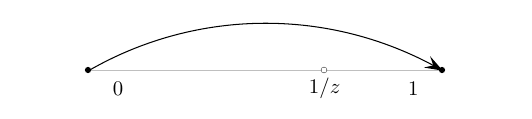}}

or its Pochhammer version. When $a,b,c$ are real,
the value $\lim_{\epsilon\downarrow0}F(a,b,c\,|\,z-\epsilon i)$ is its complex conjugate. When $a,b,c$ are not all real, the difference between these limits
can be quite drastic.
The reader should be aware of this when checking the results numerically.
For example, the computer package {\it Mathematica}
seems to use the second limit (with $z-i\epsilon$). 

In the above description we have suggested that the degrees in $t$
of numerator and denominator
of $R_\gamma(\beta+t\gamma,z_0)$ are the same.
In Theorem \ref{belyi} we prove that this is indeed the case
when the vector $\beta+t\gamma$ is non-resonant. This means that none of the four
linear functions 
\[
a+kt,\quad b+lt,\quad c-a+t(m-k),\quad \text{and} \quad c-b+t(m-l)
\]
is an integer valued constant.
It turns out that the non-resonant case is the interesting case;
in Section \ref{resonant} we
give a description of the resonant cases only for completeness.
In the non-resonant case, Theorem \ref{belyi} also gives the values of $z_0$ and $R_0$.
This is a result found previously by Iwasaki in
\cite[Thm 2.3]{Iw}, although not in this wording and with a different proof
using asymptotic analysis of the Euler integral.

Although we believe that for a given admissable quadruple there should exist a simple
procedure to determine $R_\gamma(\beta+\gamma t)$, we have not been able to discover it.
Another issue we should mention is a difference between the result of Theorem \ref{main}
and some finite evaluations in \cite{ekhad} and \cite{ebisu}. As an example consider
the identity 
$$F(t,3t-1,2t|e^{\pi i/3})=-\frac{\sqrt{3}}{2}e^{\pi i(\nicefrac{t}2+\nicefrac56)}
\left(\frac{4}{\sqrt{27}}
\right)^t\frac{\Gamma(t+\nicefrac12)\Gamma(\nicefrac13)}
{\Gamma(t+\nicefrac13)\Gamma(\nicefrac12)},$$\label{anexample}
which can be deduced from Theorem \ref{main}. It holds for all $t\in\bbbc$.
When $t=-n$ for any $n\in\bbbz_{>0}$ the left hand side is not well-defined
as hypergeometric series, but the equality should be read as the limit when
$t\to-n$. We get, after some simplication, 
$$F(-n,-3n-1,-2n|e^{\pi i/3})=-\frac{\sqrt{3}}{2}e^{5\pi i/6}
\left(\frac{-\sqrt{-27}}{4}\right)^{n}\frac{(\nicefrac23)_n}{(\nicefrac12)_n}.$$
In \cite[Thm 11]{ekhad} and \cite[4.2.4]{ebisu} we find the same evaluation,
but with the factor $-\sqrt{3}e^{5\pi i/6}/2$ missing. The reason is that in
the latter evaluations the function $F(-n,-3n-1,-2n|z)$ is interpreted as the
polynomial $F(-n,-3n-1,c|z)|_{c=-2n}$. It is remarkable that the limit and the
polynomial evaluation differ by a constant factor. In many cases when both the
$a$-parameter and $c$-parameter have limits that 
are non-positive integers this phenomenon seems to occur. 
As suggested by the referee, an explanation might be that both sequences
satisfy the same first order recurrence relation in $n$. We have not tried to elaborate this.

We have not made an exhaustive search for all admissable quadruples. This is more
or less done in \cite{ebisu}. There it is also remarked that through
the use of Kummer's solution to a hypergeometric equation
any admissable quadruple is associated to 24 others. This may explain the
abundance of these $\Gamma$-evaluations. In Section \ref{Euler}
we give a description and a proof of the existence
of these associated quadruples through the properties of the Euler kernel, which
is the integrand of the Euler integral. 

In the final section we present a more or less random list of examples of 
$\Gamma$-evaluations.

{\bf Acknowledgement} We like to thank the referee extensively for his (or her)
careful reading of the manuscript, for the correction of several 
large and small errors and for the suggestions for improvement. 

\section{Interpolation}\label{interpolation}

Let us begin with an example. We consider the case $(k,l,m)=(2,2,1)$ and carry out
the program we sketched in the introduction. We get
\begin{eqnarray*}
R_\gamma(a,b,c,z)&=&\frac{c(2+a+b-c)}{(a+1)(b+1)(z-1)^2} \quad\text{and}\\
Q_\gamma(a,b,c,z)&=&\frac{c((1 + 2 a + a^2 + 2 b + a b + b^2 - c - a c - b c)z+(1+a-c)(1+b-c))}
{a (1 + a) b (1 + b) (-1 + z)^2}.
\end{eqnarray*}
The numerator of $Q_\gamma(a+2t,b+2t,c+t,z)$ reads
\begin{eqnarray*}
&&(c+t)\left(1 + a + b + a b - 2 c - a c - b c + c^2 + z + 2 a z + a^2 z + 2 b z + 
 a b z + b^2 z\right.\\
&&\left.- c z - a c z - b c z + (2 + a + b - 2 c + 7 z + 5 a z + 5 b z - 4 c z)t + (8z+1)t^2\right).
\end{eqnarray*}
The equations for the admissible quadruple are obtained by setting this polynomial in
$t$ identically zero. We get
\[
\left\{
\begin{array}{lll}
0&=&8z+1\\
0&=&2 + a + b - 2 c + 7 z + 5 a z + 5 b z - 4 c z\\
0&=&1 + a + b + a b - 2 c - a c - b c + c^2 + z + 2 a z + a^2 z \\
&& + 2 b z + a b z + b^2 z - c z - a c z - b c z.
\end{array}\right.
\]
Solution of this system yields
\begin{equation}
\label{eq:Section2FirstExample}
z_0=-\nicefrac18,\quad a=2t,\quad b=2t+\nicefrac13,\quad c=t+\nicefrac56
\end{equation}
or 
$$z_0=-\nicefrac18,\quad a=2t,\quad b=2t-\nicefrac13,\quad c=t+\nicefrac23.$$
Taking the first possibility we get
$$R_\gamma\big(2t,2t+\nicefrac13,t+\nicefrac56,-\nicefrac18\big)=\frac{16}{27}\times\frac{t+\nicefrac56}{t+\nicefrac23}.$$
So we find from (\ref{functional}) that
$$F\big(2(t+1),2(t+1)+\nicefrac13,t+1+\nicefrac56\,|\,-\nicefrac18\big)=\frac{16}{27}\times\frac{t+\nicefrac56}{t+\nicefrac23}
\times F(2t,2t+\nicefrac13,t+\nicefrac56\,|\,-\nicefrac18)$$
for all $t$. Notice that $\left(\frac{16}{27}\right)^t\frac{\Gamma(t+\nicefrac56)}{\Gamma(t+\nicefrac23)}$
satisfies the same functional equation. The corresponding functions
turn out to differ by a constant factor, as shown in Corollary 
\ref{example1}.

In this section we prove Theorem \ref{main} which states that for
any admissible quadruple $(\beta,z_0)$ there
exists a complex interpolation of the $\Gamma$-evaluations. 
We find from \cite[Corollary 1.4.4]{AAR} the following estimate. 
\begin{lemma}\label{gamma-estimate}
Suppose $s=a+bi$ with $a_1<a<a_2$ and $\,|\,b\,|\,\to\infty$.
Then
$$ | \Gamma(a+bi) | = \sqrt{2\pi} \,|b|^{a-\frac12}\,e^{-\frac{\pi | b |}{2}}\, \big[1+O(1/ | b | )\big].$$
\end{lemma}

\begin{proposition}\label{hypergeo-estimate}
Let $\beta=(a,b,c)\in\bbbr^3$ and $\gamma=(k,l,m)\in\bbbz^3$. Let $z_0\in\bbbc$ and
$z_0\ne1$. Then, 
$F(\beta+\gamma t \,| z_0)$ is a meromorphic function in $t\in\bbbc$ having at most finitely
many poles with $ | \Re(t) | \le\frac12$. Let 
$$C_1= | k\arg(1-z_0) | + \frac{| l | \pi}2+ \frac{|m-l | \pi}2- \frac{| m | \pi}2,\quad  |\arg(1-z_0) | \le\pi.$$
Then there exist $C_2,C_3\ge0$ such that
$$ \big| F(\beta+\gamma t\, | z_0) \big| 
\le C_2\, | \Im(t) | ^{C_3} \,e^{C_1 | \Im(t) | }$$
for all $t\in\bbbc$ with $  |\Re(t)|  \le \frac12$ and $ | \Im(t) | $ sufficiently large.
\end{proposition}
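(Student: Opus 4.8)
\emph{Strategy.} The plan is to work with Euler's integral, writing
$$F(\beta+\gamma t\,|\,z_0)=G(t)\,I(t),\qquad G(t)=\frac{\Gamma(c+mt)}{\Gamma(b+lt)\,\Gamma\big(c-b+(m-l)t\big)},$$
with $I(t)=\int_0^1 x^{b+lt-1}(1-x)^{c-b+(m-l)t-1}(1-z_0x)^{-(a+kt)}\,dx$ the plain Euler integral (valid whenever $\Re(b+lt)>0$ and $\Re(c-b+(m-l)t)>0$), and to estimate the two factors separately. Throughout set $\sigma=\Re(t)$ and $\tau=\Im(t)$. For meromorphy on all of $\bbbc$ I would replace $[0,1]$ by the Pochhammer contour $C$ and insert the normalization factor; the resulting contour integral is \emph{entire} in $t$, since its integrand is entire in $t$ for each point of the compact contour (which avoids $0$, $1$ and $1/z_0$). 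Hence $F(\beta+\gamma t\,|\,z_0)$ is meromorphic, and its candidate singularities --- the poles of $\Gamma(c+mt)$ and the zeros of the normalization factors --- all lie on finitely many real arithmetic progressions in $t$, because $b,c\in\bbbr$ and $k,l,m\in\bbbz$. Only finitely many such points meet $|\Re(t)|\le\tfrac12$, which gives the meromorphy statement.

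For $G(t)$ I would apply Lemma~\ref{gamma-estimate} to each of the three Gamma factors, whose arguments have imaginary parts $m\tau$, $l\tau$ and $(m-l)\tau$ and real parts bounded on $|\sigma|\le\tfrac12$. The exponential factors $e^{-\pi|\cdot|/2}$ then combine to give
$$|G(t)|=O\big(|\tau|^{C_3}\big)\,\exp\!\Big(\tfrac{\pi}{2}\big(|l|+|m-l|-|m|\big)\,|\tau|\Big)$$
as $|\tau|\to\infty$, uniformly for $|\sigma|\le\tfrac12$, the power of $|\tau|$ coming from the factors $|\tau|^{(\,\cdot\,)-1/2}$ in the lemma. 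Note $|l|+|m-l|-|m|\ge0$ by the triangle inequality, so this factor genuinely grows.

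For $I(t)$ I would bound the integrand in modulus. On $(0,1)$ the factors $x^{b+lt-1}$ and $(1-x)^{c-b+(m-l)t-1}$ have $\tau$-independent modulus $x^{b+l\sigma-1}$ and $(1-x)^{c-b+(m-l)\sigma-1}$, since $\log x$ and $\log(1-x)$ are real there; the entire $\tau$-dependence then sits in
$$\big|(1-z_0x)^{-(a+kt)}\big|=|1-z_0x|^{-(a+k\sigma)}\,e^{\,k\tau\,\arg(1-z_0x)}.$$
The key observation is that $x\mapsto\arg(1-z_0x)$ is monotonic on $[0,1]$: a short computation gives $\tfrac{d}{dx}\arg(1-z_0x)=-\Im(z_0)/|1-z_0x|^2$, which has constant sign, while for $z_0\notin[1,\infty)$ the segment $\{1-z_0x:0\le x\le1\}$ misses the origin. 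Its range is therefore the interval between $\arg(1)=0$ and $\arg(1-z_0)$, so $k\tau\,\arg(1-z_0x)\le|k\,\arg(1-z_0)|\,|\tau|$ uniformly in $x$, whence
$$|I(t)|\le e^{|k\,\arg(1-z_0)|\,|\tau|}\int_0^1\big|x^{b+l\sigma-1}(1-x)^{c-b+(m-l)\sigma-1}(1-z_0x)^{-(a+k\sigma)}\big|\,dx,$$
with the remaining integral bounded for $|\sigma|\le\tfrac12$. Multiplying the two bounds produces exactly $C_1=|k\,\arg(1-z_0)|+\tfrac{\pi}{2}(|l|+|m-l|-|m|)$.

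The hard part will be making the bound uniform across the whole strip $|\Re(t)|\le\tfrac12$. Over $[0,1]$ the integral $I(t)$ converges only when $b+l\sigma>0$ and $c-b+(m-l)\sigma>0$; since these conditions involve only $\sigma$ and not $\tau$, for each such $\sigma$ the estimate already holds for all $\tau$. For the remaining $\sigma$ one must pass to the Pochhammer contour and check that the extra contributions of the small circles about $0$ and $1$ --- where $\arg x$ and $\arg(1-x)$ no longer vanish, so new $\tau$-growth appears --- are exactly compensated by the normalization factors $(e^{2\pi i(c+mt)}-1)^{-1}(e^{2\pi i(c-b+(m-l)t)}-1)^{-1}$, leaving the exponential rate unchanged. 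Equivalently, one may prove the bound on the sub-strip where the segment integral converges and extend it to $|\Re(t)|\le\tfrac12$ by a Phragm\'en--Lindel\"of argument, using that $F(\beta+\gamma t\,|\,z_0)$ is of finite exponential order. Finally the borderline case $z_0\in(1,\infty)$, where $|\arg(1-z_0)|=\pi$ and the segment meets the origin, is covered by the limiting definition $\lim_{\epsilon\downarrow0}F(\beta+\gamma t\,|\,z_0+\epsilon i)$ together with a continuity argument; the separate estimates for $G$ and for the convergent segment integral are otherwise routine.
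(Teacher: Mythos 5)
Your core estimate is the same as the paper's: split off the Gamma prefactor $G(t)=\Gamma(c+mt)/\big(\Gamma(b+lt)\Gamma(c-b+(m-l)t)\big)$, bound it by Lemma~\ref{gamma-estimate} to get the $\tfrac{\pi}{2}(|l|+|m-l|-|m|)$ part of $C_1$, and bound the segment Euler integral by pulling out $e^{|k\arg(1-z_0)||\Im(t)|}$ from $(1-z_0x)^{-(a+kt)}$ (your monotonicity computation for $x\mapsto\arg(1-z_0x)$ is a nice, slightly cleaner justification of the paper's $\max_x$ bound). Up to that point the argument is correct and matches the paper.

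The genuine gap is exactly where you flag "the hard part": extending the bound from the parameter range where the segment integral converges to the whole strip $|\Re(t)|\le\tfrac12$. Neither of your two proposed fixes works as stated. The Phragm\'en--Lindel\"of suggestion is unsound: the set of $\sigma=\Re(t)$ for which $b+l\sigma>0$ and $c-b+(m-l)\sigma>0$ is a subinterval of $[-\tfrac12,\tfrac12]$ (possibly empty, e.g.\ if $b\le-|l|/2$), and Phragm\'en--Lindel\"of/three-lines arguments only \emph{interpolate} the growth indicator between two vertical lines on which you already have the bound; they cannot \emph{extrapolate} a bound from a subinterval of $\sigma$ to a larger one (the indicator function is convex in $\sigma$, so it may well be larger outside the subinterval). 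The Pochhammer-contour alternative could in principle work, but the claimed "exact compensation" between the circle/winding contributions (which grow like $e^{2\pi n|l||\Im(t)|}$) and the normalization factors (whose size depends on the signs of $l\Im(t)$ and $(m-l)\Im(t)$) is precisely the nontrivial bookkeeping, and you have not carried it out. The paper sidesteps all of this with a different device: choose integers $\Delta a,\Delta b,\Delta c$ so that the shifted parameters satisfy $-a-\Delta a>|k|/2$, $b+\Delta b>|l|/2$, $c-b+\Delta c-\Delta b>|m-l|/2$, and use a contiguity relation
$$F(\beta+t\gamma\,|\,z_0)=r(t,z_0)F(\beta+\Delta\beta+t\gamma\,|\,z_0)+s(t,z_0)F(\beta+\Delta\beta+(0,0,1)+t\gamma\,|\,z_0),$$
with $r,s$ rational in $t$; these coefficients contribute only polynomial growth in $|\Im(t)|$, which is absorbed into the factor $C_2|\Im(t)|^{C_3}$. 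You should replace your extension step by this (or by an honest Pochhammer estimate).
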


\begin{proof}
In order to prove our estimate we use the ordinary Euler integral.
We first prove the proposition under the assumption that $-a> \frac{| k |}2,$ that $ b> \frac{| l |}2$, and 
that $c-b> \frac{| m-l |} 2$.
Let us write
$$G(\beta+\gamma t \,| z_0):=
\int_0^1\frac{x^{b-1+lt}(1-x)^{c-b-1+(m-l)t}}{(1-z_0x)^{a+kt}} dx.$$
This integral converges for all $t$ with $|\Re(t)|\le\frac12$ because of our assumptions on
$a,b,$ and $c$. Since $|\Re(t)|\le\frac12$, we get
$$ | x^{b-1+lt} | \le x^{b-1- \frac{| l |}2}\ \quad \mbox{and}\quad 
\  | (1-x)^{c-b-1+(m-l)t} | \le (1-x)^{c-b-1- \frac{| m-l |}2}.$$
Let $c_1=\int_0^1x^{b-1-\frac{ | l |}2}(1-x)^{c-b-1- \frac{| m-l |}2}dx$. Recall that
$$
 \big| (1-z_0x)^{-a-kt}\big | = | 1-z_0x | ^{-a-k\Re(t)}\exp\big(k\arg(1-z_0x)\Im(t)\big).
$$
Let $c_2=\max_{x\in[0,1], | y | \le\frac12} | 1-z_0x | ^{-a-ky}$, which
is finite because $-a>|k|/2$. Notice also that
$$
\max_{x\in[0,1]}\exp\big(k\arg(1-z_0x)\Im(t)\big)
\le \exp\big( |k \arg(1-z_0)\,\Im(t) | \big).
$$
We conclude that $ | G(\beta+t\gamma | z_0) | $ has the upper bound
$c_1c_2\,e^{ | k\arg(1-z_0)\,\Im(t) | }$. Using Lemma~\ref{gamma-estimate} we
find the desired estimate for 
$$F(\beta+t\gamma\, | z_0)=\frac{\Gamma(c+mt)}
{\Gamma(b+lt)\Gamma(c-b+(m-l)t)}G(\beta+t\gamma | z_0)$$
when $-a> \frac{|k|}2,b> \frac{|l|}2$ and $c-b> \frac{|m-l|}2$. 

In the general situation we first choose integers $\Delta a,\Delta b,\Delta c$ such that
$$
-a-\Delta a-1> \frac{| k |}2,\quad b+\Delta b> \frac{|l|}2, \quad c-b+\Delta c-\Delta b> \frac{|m-l|}2.
$$ 
Denote $\Delta\beta=(\Delta a,\Delta b,\Delta c)$. Then, there exists a contiguity relation
$$
F(\beta+t\gamma | z)=r(t,z)F\left(\beta+\Delta\beta+t\gamma | z\right)
+s(t,z)F\left(\beta+\Delta\beta+(1,1,1)+t\gamma | z\right),
$$
where $r(t,z)$ and $s(t,z)$ are rational functions in $z,t$.
In Lemma \ref{contiguity-denominator} we show that, as rational function of $z$,
their only poles are in $z=0,1$.Hence we can specialize to $z=z_0$ and get
$$
F(\beta+t\gamma | z_0)=r(t,z_0)F\left(\beta+\Delta\beta+t\gamma | z_0\right)
+s(t,z_0)F\left(\beta+\Delta\beta+(1,1,1)+t\gamma | z_0\right),
$$
We then apply the above estimate to
the terms on the right hand side. 
\end{proof}

\begin{lemma}\label{contiguity-denominator}
Let $a,b,c$ be hypergeometric parameters such that $a,b,c-a,c-b\not\in\bbbz$.
Let $a',b',c'$ be contiguous parameters, that is $a'-a,b'-b,c'-c\in\bbbz$.
Consider the contiguity relation
$$
F(a',b',c'|z)=r(z)F(a,b,c|z)+s(z)F(a+1,b+1,c+1|z),
$$
where $r(z),s(z)$ are rational functions in $a,b,c,z$. Then as rational
functions in $z$, the functions $r(z),s(z)$ have only poles in $z=0,1$.
\end{lemma}

\begin{proof}
We know that $F(a+1,b+1,c+1|z)=\frac{c}{ab}F'(a,b,c|z)$. The contiguity
relation is stable under analytic continuation in $z$.
Therefore we have a similar relation for the solution of the
hypergeometric equation corresponding to the local exponent $1-c$. Thus there
exists a rational function $\lambda$ in $a,b,c$ such that
$\lambda z^{1-c'}F(a'+1-c',b'+1-c',2-c'|z)$ equals
$$r(z)z^{1-c}
F(a+1-c,b+1-c,2-c|z)+s(z)\frac{c}{ab}(z^{1-c}F(a+1-c,b+1-c,2-c|z))'.
$$
We can now solve for $r(z),s(z)$ and find that
\begin{align*}
\begin{pmatrix}r(z)\\ \frac{c}{ab}s(z)\end{pmatrix}=&
\frac{1}{W(z)}\begin{pmatrix} 
(z^{1-c}F(a+1-c,b+1-c,2-c|z)' & - F'(a,b,c|z)\\
-z^{1-c}F(a+1-c,b+1-c,2-c|z) & F(a,b,c|z)
\end{pmatrix}\\
&\times\begin{pmatrix}F(a',b',c') \\ 
\lambda z^{1-c'}F(a'+1-c',b'+1-c',2-c'|z)\end{pmatrix},
\end{align*}
where $W(z)$ is the Wronskian determinant of the hypergeometric equation,
which equals $1-c$ times $z^{-c}(1-z)^{c-a-b-1}$. The matrices on the
right hand side have entries which are locally holomorphic outside $0,1,\infty$
and therefore we conclude that the same holds for $r(z),s(z)$. 

Strictly speaking we have proved the lemma when $c\not\in\bbbz$. The case
of integral $c$ runs similarly.
\end{proof}

\begin{proposition}\label{minicarlson}
Let $f(t)$ be a periodic entire function with unit period one. Suppose that
there are constants $C^+,C^-\ge0$ such that 
\begin{enumerate}
\item $ | f(t) | =O\big(e^{C^+\Im(t)}\big)$ when $\Im(t)\to\infty$, and 
\item $ | f(t) | =O\big(e^{-C^-\Im(t)}\big)$ when $\Im(t)\to-\infty$.
\end{enumerate}
Then, $f(t)=g(e^{2\pi it})$ where $g(z)\in\bbbc[z,1/z]$.
Moreover, $g$ has a pole of order at most $C^+/2\pi$ at $z=0$, and
a pole of order at most $C^-/2\pi$ at $z=\infty$.
\end{proposition}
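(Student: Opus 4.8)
The plan is to exploit the period-one invariance to pass to the variable $z=e^{2\pi i t}$, turning $f$ into a single-valued holomorphic function $g$ on the punctured plane $\bbbc^\times$, and then to read off the two growth hypotheses directly as vanishing conditions on the Laurent coefficients of $g$. First I would observe that $t\mapsto e^{2\pi i t}$ is a holomorphic covering $\bbbc\to\bbbc^\times$, and that since $f$ is entire and invariant under $t\mapsto t+1$, it descends to a well-defined holomorphic function $g$ on $\bbbc^\times$ with $f(t)=g(e^{2\pi i t})$. Being holomorphic on the annulus $\bbbc^\times$, the function $g$ has a Laurent expansion $g(z)=\sum_{n\in\bbbz}a_nz^n$ converging throughout $\bbbc^\times$; equivalently $f(t)=\sum_n a_n e^{2\pi i nt}$ is the Fourier expansion of the periodic entire function $f$.

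Next I would extract the coefficients by integrating along a horizontal segment of length one. Writing $t=x+iy$ with $y$ fixed and $x$ running over $[0,1]$, orthogonality of the exponentials yields
\[
a_n=e^{2\pi ny}\int_0^1 f(x+iy)\,e^{-2\pi i nx}\,dx
\]
for every real $y$. The key point is that the prefactor $e^{2\pi ny}$ can be played against the growth of $f$ by sending $y$ to $+\infty$ or to $-\infty$. Since $|z|=e^{-2\pi y}$, the limit $y\to+\infty$ probes the behaviour of $g$ near $z=0$, while $y\to-\infty$ probes $z=\infty$.

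Then I would insert the two hypotheses. By condition (1) there are $M^+$ and a threshold so that $|f(x+iy)|\le M^+e^{C^+y}$ for $y$ large, whence $|a_n|\le M^+e^{(2\pi n+C^+)y}$; letting $y\to+\infty$ forces $a_n=0$ whenever $2\pi n+C^+<0$, i.e. $n<-C^+/2\pi$. Symmetrically, condition (2) gives $|a_n|\le M^-e^{(2\pi n-C^-)y}$, and letting $y\to-\infty$ forces $a_n=0$ whenever $n>C^-/2\pi$. Hence only finitely many coefficients survive, so $g\in\bbbc[z,1/z]$; the lowest surviving index is at least $-C^+/2\pi$, giving a pole of order at most $C^+/2\pi$ at $z=0$, and the highest surviving index is at most $C^-/2\pi$, giving a pole of order at most $C^-/2\pi$ at $z=\infty$.

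The one delicate point I would be careful to record is the uniformity in $x$ of the bounds from (1) and (2). Read literally, these are statements about $\Im(t)\to\pm\infty$ over the whole upper and lower regions, so they are already uniform in $\Re(t)$; alternatively, periodicity reduces the relevant range to the compact interval $x\in[0,1]$, on which continuity upgrades the one-sided estimate to a uniform one. With that in hand the coefficient integral is controlled and the whole argument is a clean Fourier-coefficient estimate, a baby version of Carlson's theorem matching the name of the proposition.
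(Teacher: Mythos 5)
Your proposal is correct and follows essentially the same route as the paper: both pass to $g(z)=f\bigl(\tfrac{\log z}{2\pi i}\bigr)$ on $\bbbc^\times$ and translate the vertical growth hypotheses into the radial bounds $|g(z)|=O\bigl(|z|^{-C^+/2\pi}\bigr)$ near $0$ and $O\bigl(|z|^{C^-/2\pi}\bigr)$ near $\infty$. The only difference is that you make the final step explicit by estimating the Laurent (Fourier) coefficients on circles $|z|=e^{-2\pi y}$, whereas the paper leaves that to the standard growth classification of isolated singularities; your version is a complete writeup of the same argument.
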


\begin{proof}
Consider the composite function $g(z)=f\big(\frac{\log z}{2\pi i}\big)$. This is an
entire function in $z$, except possibly
at $z=0$, which is an isolated singularity. Notice that $\Im(t)=-\frac{\log | z |}{ 2\pi}$.
So when $z\to0$ we get $\Im(t)\to\infty$ and we can use the estimate
$$| f(t) | =O\big(e^{-\frac{\log | z |}{ 2\pi}\,C^+}\big)=O\big( | z | ^{-\frac{C^+}{2\pi}}\big).$$
When $z\to\infty$ we get $\Im(t)\to-\infty$ and we can use the estimate
\[
| f(t) | =O\big(e^{\frac{\log | z |}{2\pi}\,C^-}\big)=O\big(|z|^{\frac{C^-}{2\pi}}\big).\qedhere
\]
\end{proof}

We can now show our main theorem.

\begin{theorem}\label{main}
We use the notations from the introduction. Let $(\beta,z_0)$ be an admissible
quadruple with respect to $\gamma=(k,l,m)\in\bbbz^3$. We assume that $m\ge0$ and
$c\not\in\bbbz_{\le0}$ when $m=0$. Write
$$R_\gamma(\beta+t\gamma,z_0)=R_0\times\prod_{j=1}^r\frac{(t+\alpha_j)}
{(t+\delta_j)}.$$
Then, there exists $g(z)\in\bbbc[z,1/z]$ such that
$$
F(\beta+t\gamma \,|\,z_0)=g\left(e^{2\pi it}\right)\,R_0^t\prod_{j=1}^r\frac{\Gamma(t+\alpha_j)}
{\Gamma(t+\delta_j)}$$
for all $t\in\bbbc$. Moreover, $g$ has a pole order at most
$$\frac{\arg(R_0)}{2\pi}+ \frac{| k\arg(1-z_0) |}{ 2\pi}+ \frac{| l |}4+ \frac{| m-l |} 4- \frac{| m |}4$$
at $z=0$ and order at most
$$-\frac{\arg(R_0)}{2\pi}+ \frac{| k\arg(1-z_0) |}{2\pi}+ \frac{| l |} 4+ \frac{| m-l |}4- \frac{| m |}4$$
at $z=\infty$.
\end{theorem}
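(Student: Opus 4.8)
The plan is to form the quotient
$$f(t)=\frac{F(\beta+t\gamma\,|\,z_0)}{R_0^t\prod_{j=1}^r\Gamma(t+\alpha_j)/\Gamma(t+\delta_j)},$$
to show that it is a periodic entire function of $t$ with period $1$ and with controlled exponential growth in $\Im(t)$, and then to invoke Proposition \ref{minicarlson} to conclude that $f(t)=g(e^{2\pi it})$ for a Laurent polynomial $g$. Periodicity is immediate from the functional equation \fref{functional}: writing $D(t):=R_0^t\prod_j\Gamma(t+\alpha_j)/\Gamma(t+\delta_j)$ for the denominator, the recursion $\Gamma(s+1)=s\Gamma(s)$ gives $D(t+1)=R_\gamma(\beta+t\gamma,z_0)D(t)$, which is exactly the relation \fref{functional} satisfied by $F(\beta+t\gamma\,|\,z_0)$, so $f(t+1)=f(t)$.

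The crux is to prove that $f$ is entire. I would use the standard fact that $F(\beta+t\gamma\,|\,z_0)/\Gamma(c+mt)$ is entire in $t$ (for $z_0\notin[1,\infty)$, and via the boundary convention for $z_0\in(1,\infty)$), so that every pole of the numerator occurs where $c+mt\in\bbbz_{\le0}$; when $m=0$ the hypothesis $c\notin\bbbz_{\le0}$ removes these and $F$ is itself entire. The denominator $D$ contributes poles at $t=-\alpha_j-n$ and zeros at $t=-\delta_j-n$ for $n\ge0$, so the only candidate poles of $f$ lie at the poles of $\Gamma(c+mt)$ and at the points $t=-\delta_j-n$. Since the divisor of the periodic function $f$ is $\bbbz$-invariant, it suffices to exclude poles in one strip $|\Re(t)|\le\frac12$, where by Proposition \ref{hypergeo-estimate} only finitely many occur. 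Here the functional equation does the work: because $t=-\delta_j$ is a pole of $R_\gamma(\beta+t\gamma,z_0)$, finiteness of $F$ at the neighbouring point forces $F(-\delta_j)=0$, and propagating \fref{functional} downward shows $F$ vanishes at each $t=-\delta_j-n$, cancelling the zeros of $D$; dually, a pole of $F$ cannot be carried upward through a zero $-\alpha_j$ of $R_\gamma$, so the poles of $\Gamma(c+mt)$ are forced to sit on the progressions $-\alpha_j-n$, matching the poles of $\prod_j\Gamma(t+\alpha_j)$. Turning this matching into an exact equality of divisors, with due attention to multiplicities and possible coincidences, is what I expect to be the main obstacle.

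For the growth estimate I would combine Proposition \ref{hypergeo-estimate}, which bounds $|F(\beta+t\gamma\,|\,z_0)|$ on the strip by $C_2\,|\Im(t)|^{C_3}e^{C_1|\Im(t)|}$ with $C_1=|k\arg(1-z_0)|+\frac{|l|\pi}2+\frac{|m-l|\pi}2-\frac{|m|\pi}2$, with the asymptotics of Lemma \ref{gamma-estimate} for the $\Gamma$-factors. The decisive simplification is that the $r$ factors $\Gamma(t+\alpha_j)$ and the $r$ factors $\Gamma(t+\delta_j)$ each contribute $e^{-r\pi|\Im(t)|/2}$, so their exponential parts cancel and only $|R_0^t|\asymp e^{-\arg(R_0)\Im(t)}$ survives (up to a power of $|\Im(t)|$). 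Dividing, $|f(t)|$ is $O\big(|\Im(t)|^{C}e^{(C_1+\arg(R_0))\Im(t)}\big)$ as $\Im(t)\to+\infty$ and $O\big(|\Im(t)|^{C}e^{-(C_1-\arg(R_0))\Im(t)}\big)$ as $\Im(t)\to-\infty$, so $C^+=C_1+\arg(R_0)$ and $C^-=C_1-\arg(R_0)$.

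Finally I would apply Proposition \ref{minicarlson} to the entire periodic function $f$, obtaining $f(t)=g(e^{2\pi it})$ with $g\in\bbbc[z,1/z]$ and poles of order at most $C^+/2\pi$ at $z=0$ and $C^-/2\pi$ at $z=\infty$; substituting the values of $C^\pm$ reproduces the two displayed bounds, and rearranging $f(t)=g(e^{2\pi it})$ into $F(\beta+t\gamma\,|\,z_0)=g(e^{2\pi it})D(t)$ gives the claimed identity for all $t\in\bbbc$ by analytic continuation. The equal count $r$ of numerator and denominator $\Gamma$-factors, needed both for the divisor bookkeeping in the entireness step and for the exponential cancellation above, is exactly what Theorem \ref{belyi} guarantees in the non-resonant case.
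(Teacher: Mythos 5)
Your overall strategy --- form the quotient $G(t)=F(\beta+t\gamma\,|\,z_0)\,R_0^{-t}\prod_j\Gamma(t+\delta_j)/\Gamma(t+\alpha_j)$, verify periodicity from \fref{functional} and $\Gamma(s+1)=s\Gamma(s)$, bound its growth on a vertical strip by combining Proposition \ref{hypergeo-estimate} with Lemma \ref{gamma-estimate}, and feed the result into Proposition \ref{minicarlson} --- is exactly the paper's, and your constants $C^{\pm}=C_1\pm\arg(R_0)$ reproduce the stated pole bounds correctly. The one place where you diverge, the proof that $G$ is entire, is also the one place where you leave a genuine gap: you correctly identify the candidate poles (where $c+mt\in\bbbz_{\le0}$ and where $t+\delta_j\in\bbbz_{\le0}$), but you then try to cancel them inside the central strip $|\Re(t)|\le\frac12$ by propagating zeros and poles of $F$ up and down through the functional equation, and you concede yourself that the resulting divisor bookkeeping (multiplicities, coincidences among the progressions) is the unresolved ``main obstacle.''

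The paper disposes of this in two lines, and this is precisely where the hypotheses $m\ge0$ and $c\notin\bbbz_{\le0}$ when $m=0$ do their work: every candidate pole of $G$ lies in a half-plane $\Re(t)\le T$ for a fixed $T$, since for $m>0$ the condition $c+mt\in\bbbz_{\le0}$ forces $\Re(t)\le-\Re(c)/m$, for $m=0$ the hypothesis on $c$ excludes such poles altogether, and $t+\delta_j\in\bbbz_{\le0}$ forces $\Re(t)\le-\Re(\delta_j)$. Hence $G$ is holomorphic on the right half-plane $\Re(t)>T$, and since its polar set is invariant under $t\mapsto t+1$, it has no poles at all. You had both ingredients in hand --- the list of candidate poles and the $\bbbz$-invariance of the divisor --- but chose to examine the strip $|\Re(t)|\le\frac12$ rather than a strip far to the right, which converts a triviality into an open-ended matching problem. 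With that single substitution your argument closes and coincides with the paper's. (A minor attribution point: the paper derives the equality of the numbers of factors $t+\alpha_j$ and $t+\delta_j$ from Lemma \ref{lemma:DegreesInt} rather than from Theorem \ref{belyi}, though both live in the non-resonant analysis of Section \ref{Euler}.)
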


\begin{remark}
We have used that the numerator and denominator of $R$ have the same
degree. This is a consequence of Lemma~\ref{lemma:DegreesInt}. 
\end{remark}

\begin{remark}
The assumption $m\ge0$ is not a restriction. If $m<0$, then we apply Theorem~\ref{main}
with $-\gamma$ and simply replace $t$ by $-t$.
\end{remark}

\begin{proof}[Proof of Theorem~\ref{main}.]
We find that
$$G(t):=F(\beta+t\gamma\,|\,z_0)\,R_0^{-t}\,\prod_{j=1}^r\frac{\Gamma(t+\delta_j)}{\Gamma(t+\alpha_j)}$$
is a meromorphic periodic function with period $1$. Poles can only arise from 
the factor
$F(\beta+t\gamma\,|\,z_0)$ when $c+mt\in\bbbz_{\le0}$, or from the product 
$\prod_j\Gamma(t+\delta_j)$ when
$t+\delta_j\in\bbbz_{\le0}$ for some $j$. It follows, since $m \ge 0$, that 
there are no poles when $\Re(t)$ is sufficiently large. Hence, $G(t)$ is holomorphic in $t$. 
We now use the estimates from Lemma~\ref{gamma-estimate} and 
Proposition~\ref{hypergeo-estimate} to get $ |R_0^t\,G(t) | =O(e^{(C_1+\epsilon) | \Im(t) | })$ for any $\epsilon>0$, where 
$$C_1= | k\arg(1-z_0) | + \frac{| l |\pi}2+ \frac{ | m-l |\pi} 2- \frac{| m |\pi}2,$$
as in Proposition \ref{hypergeo-estimate}. This yields $ | G(t) | =O(e^{(\arg(R_0)+C_1+\epsilon) | \Im(t) | })$ when $\Im(t)\to
\infty$ and $ | G(t) | =O(e^{(-\arg(R_0)+C_1+\epsilon) | \Im(t) | })$ when $\Im(t)\to-\infty$. 
The result now follows from Proposition~\ref{minicarlson}.
\end{proof}

We give three example applications.

\begin{corollary}\label{example1}
For all $t\in\bbbc$ we have 
$$
F\big(2t,2t+\nicefrac13,t+\nicefrac56 \,|\,-\nicefrac18\big)=\left(\frac{16}{27}\right)^t\frac{\Gamma(t+\nicefrac56)\Gamma(\nicefrac23)}
{\Gamma(t+\nicefrac23)\Gamma(\nicefrac56)}.
$$
\end{corollary}

\begin{proof}
In the beginning of this section we considered the example $\gamma=(2,2,1)$ and the admissible
quadruple \eqref{eq:Section2FirstExample}. From Theorem \ref{main},
applied to this example, we find that
$$\left(\frac{27}{16}\right)^t\frac{\Gamma(t+\nicefrac23)}{\Gamma(t+\nicefrac56)}F(2t,2t+\nicefrac13,t+\nicefrac56\,|\,-\nicefrac18)$$
is a Laurent polynomial in $e^{2\pi it}$. Since $\arg(16/27)=\arg(1-z_0)=0$ the estimates
for the pole order of $g$ at $0$ and $\infty$ are $\frac12$. Hence, $g$ is constant. The value of the constant can be found by setting $t=0$.
\end{proof}

\begin{corollary}\label{example2}
For all $t\in\bbbc$ we have
$$
F\big(3t,t+\nicefrac16,\nicefrac12\,|\,-3\big)=\frac{\cos(\pi t)}{16^t}\frac{\Gamma(t+\nicefrac12)\Gamma(\nicefrac13)}{\Gamma(t+\nicefrac13)\Gamma(\nicefrac12)}.$$
\end{corollary}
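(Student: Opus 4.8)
The plan is to run the machinery of Theorem~\ref{main} exactly as in Corollary~\ref{interpolapplication}; the only genuinely new feature is that here the Laurent polynomial $g$ will not be forced to be constant, but will have two terms which then assemble into the cosine factor.

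First I would identify the shift vector and the admissible quadruple. Reading off $a=3t$, $b=t+\nicefrac16$, $c=\nicefrac12$ gives $\gamma=(3,1,0)$ and $\beta=(0,\nicefrac16,\nicefrac12)$, with $z_0=-3$. Since $m=0$ and $c=\nicefrac12\notin\bbbz_{\le0}$, the hypotheses of Theorem~\ref{main} are satisfied. Proceeding as in Section~\ref{interpolation}, I would write down the contiguity relation for the shift $(3,1,0)$, specialize the numerator of $Q_\gamma(\beta+t\gamma,z)$ to $z=-3$, and check that it vanishes identically in $t$, which establishes admissibility. The same computation yields
$$R_\gamma(\beta+t\gamma,-3)=-\nicefrac1{16}\cdot\frac{t+\nicefrac12}{t+\nicefrac13},$$
so that $R_0=-\nicefrac1{16}$, $r=1$, $\alpha_1=\nicefrac12$ and $\delta_1=\nicefrac13$. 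As a corroborating check, the claimed right-hand side satisfies the functional equation \fref{functional} with precisely this $R_\gamma$, since $\cos(\pi(t+1))=-\cos(\pi t)$ supplies the factor $-1$ while the $\Gamma$-quotient supplies $(t+\nicefrac12)/(t+\nicefrac13)$.

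Next I would invoke Theorem~\ref{main} to obtain
$$F\big(3t,t+\nicefrac16,\nicefrac12\,|\,-3\big)=g\big(e^{2\pi it}\big)\big(-\nicefrac1{16}\big)^{t}\frac{\Gamma(t+\nicefrac12)}{\Gamma(t+\nicefrac13)}$$
for some $g\in\bbbc[z,1/z]$. Here $1-z_0=4$, so $\arg(1-z_0)=0$, while $\arg(R_0)=\pi$. The pole-order bounds of Theorem~\ref{main} then read $\frac{\pi}{2\pi}+0+\frac14+\frac14-0=1$ at $z=0$ and $-\frac{\pi}{2\pi}+0+\frac14+\frac14-0=0$ at $z=\infty$. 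Hence $g(z)=c_0+c_{-1}z^{-1}$ for two constants, and the whole problem reduces to determining $c_0$ and $c_{-1}$. To pin them down I would use two values of $t$ at which the series terminates and the $\Gamma$-quotient is finite and nonzero. At $t=0$ the series is the constant $1$, giving $c_0+c_{-1}=\Gamma(\nicefrac13)/\Gamma(\nicefrac12)$. At $t=-\nicefrac23$ the parameters are $(-2,-\nicefrac12,\nicefrac12)$ and a three-term evaluation gives $F(-2,-\nicefrac12,\nicefrac12\,|\,-3)=-8$; feeding this into the right-hand side (taking $\arg(-\nicefrac1{16})=\pi$ for the branch of $R_0^{t}$, and simplifying $\Gamma(-\nicefrac16)/\Gamma(-\nicefrac13)$ via the Legendre duplication formula $\Gamma(\nicefrac13)\Gamma(\nicefrac56)=2^{1/3}\sqrt\pi\,\Gamma(\nicefrac23)$) yields the second relation $c_0e^{-2\pi i/3}+c_{-1}e^{2\pi i/3}=-\Gamma(\nicefrac13)/(2\Gamma(\nicefrac12))$. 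These two relations have nonvanishing determinant $i\sqrt3$ and force $c_0=c_{-1}=\Gamma(\nicefrac13)/(2\Gamma(\nicefrac12))$. Substituting $g(z)=\frac{\Gamma(\nicefrac13)}{2\Gamma(\nicefrac12)}(1+z^{-1})$ and $\big(-\nicefrac1{16}\big)^{t}=16^{-t}e^{i\pi t}$ collapses $e^{i\pi t}+e^{-i\pi t}$ into $2\cos(\pi t)$, producing the stated formula.

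The main obstacle is twofold. The routine-but-heavy part is the contiguity computation for the shift $(3,1,0)$ needed to verify admissibility and to read off $R_\gamma$; this is mechanical, exactly parallel to the $(2,2,1)$ example of Section~\ref{interpolation}. The genuinely new point, compared with Corollary~\ref{interpolapplication}, is that $g$ is not constant: we must fix a two-term Laurent polynomial, which forces us to supply \emph{two} independent special evaluations and to keep careful track of the branch of $R_0^{t}=(-\nicefrac1{16})^{t}$ together with a $\Gamma$-duplication identity in order to reconcile the value at $t=-\nicefrac23$.
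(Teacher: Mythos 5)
Your proposal is correct and follows the paper's proof essentially verbatim: same admissible quadruple, same $R_\gamma(\beta+t\gamma,-3)=-\tfrac1{16}\cdot\frac{t+\nicefrac12}{t+\nicefrac13}$, same pole-order bounds forcing $g(e^{2\pi it})=u+ve^{-2\pi it}$, and two evaluations to pin down $u,v$. The only (harmless) deviation is your second interpolation point: the paper takes $t=-\nicefrac12$, where finiteness of the left-hand side against the pole of $\Gamma(t+\nicefrac12)$ immediately forces $u-v=0$, which is slightly slicker than your terminating evaluation at $t=-\nicefrac23$ with its attendant duplication-formula bookkeeping, but both yield the same (uniquely determined) constants.
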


\begin{proof}
Consider the admissible quadruple $a=3t,b=t+\nicefrac16,c=\nicefrac12,$ and $z_0=-3$. We get
$$R_\gamma(\beta+t\gamma,z_0)=-\frac{1}{16}\times\frac{t+\nicefrac12}{t+\nicefrac13}.$$
Application of Theorem \ref{main} yields
$$
F(3t,t+\nicefrac16,\nicefrac12\,|\,-3)=\frac{e^{\pi it}}{16^t}\frac{\Gamma(t+\nicefrac12)}{\Gamma(t+\nicefrac13)}g\left(e^{2\pi it}\right).$$
Here, $g(z)$ is a Laurent polynomial, bounded at $z = \infty$, and with a pole at $z=0$ of order at most $1$.
Hence, $g\big(e^{2\pi it}\big)=u+ve^{-2\pi it}$ for some $u,v\in\bbbc$. 
Setting $t=0$ and $t=-\frac12$ yields
\[
\left\{\begin{array}{lll}
1&=&(u+v)\Gamma(\nicefrac12)/\Gamma(\nicefrac13)\\
0 & = &  u - v.
\end{array}\right.
\]
Hence, $u=v=\Gamma(\nicefrac13)/2\Gamma(\nicefrac12)$ and our corollary follows.
\end{proof}

\begin{corollary}\label{example3}
For all $t\in\bbbc$ we have
$$F\big(3t,t+\nicefrac16,\nicefrac12\,|\,9\big)=\frac{1}{2\cdot 64^t}\left(1+e^{2\pi i\left(t+\frac16\right)}-e^{4\pi i\left(t+\frac16\right)}\right).$$
\end{corollary}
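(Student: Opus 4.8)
The plan is to follow the template of the two preceding corollaries, but with the extra care forced by $z_0=9$ lying on the cut $[1,\infty)$. First I would identify the data. The left-hand side is $F(\beta+t\gamma\,|\,9)$ along the line $\beta+t\gamma=(3t,t+\nicefrac16,\nicefrac12)$, so the shift vector is $\gamma=(3,1,0)$, i.e. $k=3,l=1,m=0$, and $(\beta,9)$ is an admissible quadruple for this $\gamma$ (found as in Section~\ref{interpolation}). Since $m=0$ and $c=\nicefrac12\notin\bbbz_{\le0}$, Theorem~\ref{main} applies. A direct computation of the contiguity relation for $\gamma=(3,1,0)$ and specialization to $z=9$ — exactly as in the worked example and the previous corollary — gives
\[
R_\gamma(\beta+t\gamma,9)=\frac1{64},
\]
a constant. (One can also read this off the claimed identity, since replacing $t$ by $t+1$ multiplies the right-hand side by $\nicefrac1{64}$.) Hence $R_0=\nicefrac1{64}$ and there are no $\Gamma$-factors ($r=0$), so Theorem~\ref{main} yields $g\in\bbbc[z,1/z]$ with $F(3t,t+\nicefrac16,\nicefrac12\,|\,9)=64^{-t}\,g(e^{2\pi it})$.

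Next I would pin down the pole orders of $g$, and this is the crux. The estimate built into Theorem~\ref{main} is symmetric in the sign of $\Im(t)$: with $\arg(1-9)=-\pi$ and $k=3$ it only bounds the poles by order $2$ at both $z=0$ and $z=\infty$, i.e. five unknown coefficients $c_{-2},\dots,c_2$. This is fatal, because the series terminates only when $3t$ or $t+\nicefrac16$ is a non-positive integer, and at all such $t$ the quantity $e^{2\pi it}$ takes only the four values $1,e^{\pm2\pi i/3},e^{-\pi i/3}$; there are not enough elementary evaluations to fix five coefficients. The remedy is to redo the Euler-integral estimate of Proposition~\ref{hypergeo-estimate} for this specific $z_0$ \emph{without} replacing $\arg(1-z_0x)$ by its absolute value. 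For the continuation from above one has $\arg(1-9x)=0$ on $[0,\nicefrac19)$ and $\arg(1-9x)=-\pi$ on $(\nicefrac19,1]$, so the factor $\exp(\arg(1-9x)\,k\,\Im(t))$ stays bounded as $\Im(t)\to+\infty$ and grows only like $e^{3\pi|\Im(t)|}$ as $\Im(t)\to-\infty$. Combined with the $\Gamma$-prefactor asymptotics of Lemma~\ref{gamma-estimate}, this gives for $|\Re(t)|\le\nicefrac12$ and some constant $C\ge0$ the asymmetric bounds
\[
\big|F\big(3t,t+\nicefrac16,\nicefrac12\,|\,9\big)\big|=O\big(|\Im(t)|^{C}e^{\pi\Im(t)}\big)\ \ (\Im(t)\to+\infty),\qquad O\big(|\Im(t)|^{C}e^{4\pi|\Im(t)|}\big)\ \ (\Im(t)\to-\infty).
\]
Feeding $C^+=\pi$ and $C^-=4\pi$ into Proposition~\ref{minicarlson} shows that $g$ is holomorphic at $z=0$ (pole order at most $\nicefrac12$, hence $0$) and has a pole of order at most $2$ at $z=\infty$. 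Therefore $g(z)=c_0+c_1z+c_2z^2$.

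Finally I would determine the three coefficients by elementary evaluations at terminating $t$. Taking $t=0,-\nicefrac13,-\nicefrac23$ makes $3t=0,-1,-2$, so $F$ is a finite sum that I evaluate directly at $z=9$; after dividing by $64^{-t}$ this gives $g(1)=1$, $g(e^{-2\pi i/3})=1$ and $g(e^{-4\pi i/3})=-\nicefrac12$. These three conditions form a Vandermonde system at the cube roots of unity whose unique solution is
\[
g(z)=\tfrac12\big(1+e^{\pi i/3}z-e^{2\pi i/3}z^2\big),
\]
and substituting $z=e^{2\pi it}$ together with $e^{\pi i/3}=e^{2\pi i/6}$ and $e^{2\pi i/3}=e^{4\pi i/6}$ rewrites this as the asserted formula; the remaining terminating value $t=-\nicefrac16$ (where $b=0$) serves as a consistency check. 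The main obstacle is the middle step: the crude symmetric bound of Theorem~\ref{main} must be sharpened to an asymmetric one, since only then are the finitely many terminating evaluations enough to pin down $g$.
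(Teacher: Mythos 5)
Your argument is correct, but it departs from the paper's at the decisive step. The paper accepts the symmetric pole bound of Theorem~\ref{main} (order $2$ at both $z=0$ and $z=\infty$, hence five unknowns $a_{-2},\dots,a_2$) and determines them from \emph{five} special values of $64^tF$ at $t=0,-\nicefrac13,-\nicefrac23,-\nicefrac16,\nicefrac16$: the first three are terminating sums, $t=-\nicefrac16$ gives $b=0$, and $t=\nicefrac16$ gives $a=c$, so that $F(a,b,a\,|\,9)=(1-9)^{-b}$ is still elementary. Your claim that the five-coefficient route is ``fatal'' for lack of evaluation points is therefore not right: $e^{2\pi it}$ takes a fifth value $e^{\pi i/3}$ at $t=\nicefrac16$, where the function is elementary even though the series does not terminate. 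What you do instead --- reopening the proof of Proposition~\ref{hypergeo-estimate} and keeping the sign of $\arg(1-9x)\in\{0,-\pi\}$ rather than its absolute value, so as to get the asymmetric bounds $C^+=\pi$ and $C^-=4\pi$, whence $g$ is holomorphic at $z=0$ and is a polynomial of degree at most $2$ --- is a genuine sharpening that the paper does not carry out. It buys you a $3\times3$ Vandermonde system at the cube roots of unity in place of a $5\times5$ one, and your three terminating evaluations $g(1)=1$, $g(e^{-2\pi i/3})=1$, $g(e^{-4\pi i/3})=-\nicefrac12$ do pin down the stated quadratic uniquely. Two small caveats: the asymmetric estimate must be seen to survive the contiguity reduction used for general parameters in Proposition~\ref{hypergeo-estimate} (it does, since the contiguity coefficients are only polynomially bounded in $t$), and the direction of the asymmetry is tied to the paper's convention of evaluating at $z=9$ as the limit from the upper half-plane; with the opposite convention the pole would sit at $z=0$ instead of $z=\infty$.
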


\begin{proof}
Consider the admissible quadruple $a=3t,b=t+\nicefrac16,c=\nicefrac12$, and $z_0=9$.
We find that $R_\gamma=\frac1{64}$.
So Theorem \ref{main} gives $F(3t,t+\nicefrac16,\nicefrac12\,|\,9)=64^{-t}g\left(e^{2\pi it}\right)$.
Since $|\arg(1-9)|=\pi$, we get the estimate $2$ for the polar order of $g(z)$ at $z=0$ and at $z=\infty$.
Hence, $g\big(e^{2\pi it}\big)=\sum_{k=-2}^2a_ke^{2\pi ikt}$. To determine the values of the $a_k$ we
use five special evaluations of $64^tF(3t,t+\nicefrac16,\nicefrac12\,|\,9)$
for $t = 0, -\frac13, -\frac23, -\frac16,$ and $\frac16$.
We obtain the system
\[
\left[\begin{array}{c}
1 \\ 1 \\ -\frac12 \\ \frac12 \\ \zeta
\end{array}\right]
=
\left[\begin{array}{ccccc}
 1 & 1 & 1 & 1& 1\\
  \zeta^{-2} &\zeta^2&1&\zeta^{-2}&\zeta^2  \\
  \zeta^2 &\zeta^{-2}&1 &\zeta^2&\zeta^{-2} \\
 \zeta^2&\zeta&1&\zeta^{-1}&\zeta^{-2} \\
 \zeta^{-2}&\zeta^{-1}&1&\zeta&\zeta^2
\end{array}\right]
\left[\begin{array}{l}
a_{-2} \\ a_{-1} \\ a_0 \\ a_1 \\ a_2
\end{array}\right]
\]
where $\zeta = e^{\frac{\pi i}3}$.
The evaluation at $t=1/6$ requires some explanation. We need
to determine $2F(\nicefrac12,\nicefrac13,\nicefrac12|9)=
2\cdot(1-9)^{-\nicefrac13}$.
The absolute value is of course $1$. It remains to determine the argument.
By our branch choice we take a path in the upper half plane from $z=1/2$
to $z=9$. The argument of $(1-z)^{-\nicefrac13}$ then changes from $0$ to
$\pi/3$. Hence the function value becomes $\zeta$.
Solution of the system gives
our corollary.
\end{proof}

\section{Resonant quadruples}\label{resonant}
Our next goal is to explain the values of $z_0$ and $R_\gamma(\beta+t\gamma)$
that occur in the above considerations. For that purpose it turns out to be
convenient to restrict
to admissible quadruples such that $\beta+t\gamma$ is {\it non-resonant},
that is, none of 
\[
a+kt,\quad b+lt,\quad c-b+(m-l)t,\quad \text{ and } \quad c-a+(m-k)t
\] is an element of $\bbbz$. If
at least one of these linear polynomials is an integer constant, then we say that the quadruple
is {\it resonant}. In this section we make some comments on the resonant case
and proceed with the non-resonant cases in the next sections.
We will use the identity
\begin{equation}\label{secondfunction}
F(c-a,c-b,c\,|\,z)=(1-z)^{a+b-c}F(a,b,c\,|\,z).
\end{equation}

Suppose that $\beta+t\gamma$ is resonant and that $(\beta+t\gamma,z_0)$
is a resonant quadruple. Then, we distinguish the following
cases.

\begin{enumerate}
\item Exactly one of $a+kt,b+lt,c-b+(m-l)t,$ and $c-a+(m-k)t$ is an integer.
\begin{enumerate}
\item \label{item:casea} If $a+kt\in\bbbz$, then we conjecture that the admissible quadruples
are given by $a=2,b=1+lt,c=2+mt,$ and $z_0=\nicefrac ml$,  with 
$\Gamma$-evaluation
$$F(2,1+lt,2+mt\,|\,m/l)=\frac{l(1+mt)}{l-m},$$
or $a=-1,b=lt,c=mt,$ and $z_0=\nicefrac ml$ with $\Gamma$ evaluation
$$F(-1,lt,mt\,|\,m/l)=0.$$
It should be remarked that these evaluations are a direct consequence of the
general identities
$$F\Big(2,r,s\,\Big|\,\frac{s-2}{r-1}\Big)=\frac{(r-1)(s-1)}{r-s+1}
\quad \text{ and } \quad 
F\Big(-1,r,s\,\Big|\,\frac sr\Big)=0,$$
which are easy to prove. A similar remark applies to the next cases.
\item The case $b+lt\in\bbbz$ is similar to case \eqref{item:casea}.
\item  \label{item:casec} If $c-a+(m-k)t\in\bbbz$, then we use the identity
(\ref{secondfunction}) to get
$$F\big(mt,1+(m-l)t,2+mt\,|\,\nicefrac ml\big)=\left(1-\frac{m}{l}\right)^{(l-m)t}(1+mt)$$
and
$$F\big(mt+1,(m-l)t,mt\,|\,\nicefrac ml\big)=0.$$
\item The case $c-b+(m-l)t\in\bbbz$ is similar to case \eqref{item:casec}.
\end{enumerate}

\item Exactly two of $a+kt,b+lt,c-b+(m-l)t,c-a+(m-k)t$ are in $\bbbz$,
\begin{enumerate}
\item If $a+kt, b+lt \in \bbbz$, then admissability implies that either
$ab=0$ or 
$a\in\bbbz,b=1-a,c=mt,$ and $z_0=\nicefrac12$. In the latter
case we might as well replace $mt$ by $t$. Bailey's identity gives
$$F(a,1-a,t\,|\,\nicefrac12)=\frac{\Gamma(\nicefrac t2)\Gamma(\nicefrac{(t+1)}2)}{\Gamma(\nicefrac{(t+a)}2)\Gamma(\nicefrac{(1+t-a)}2)}.$$
\item If $a+kt,c-b+(m-l)t\in\bbbz$, then admissability implies that
either $a(b-c)=0$ or $a\in\bbbz,b=t,c=t-a+1,$ and $z_0=-1$.
In the latter case Kummer's identity gives
$$F(a,t,t+1-a\,|\,-1)=\frac{1}{2}\frac{\Gamma(\nicefrac t2)\Gamma(t-a+1)}{\Gamma(t)\Gamma(\nicefrac t2-a+1)}.$$
\item If $b+lt,c-b+(m-l)t\in\bbbz$, then admissability implies that
either $b=1$ and $c=2$, or $b\in\bbbz,c=2b,$ and $z_0=2$.
In the former case we get
$$F(1+t,1,2\,|\,z)=\frac{(1-z)^{-t} - 1}{tz},$$
in the latter case we get
$$F(1-2t,b,2b\,|\,2)=\frac{\Gamma(t)\Gamma(b+\nicefrac12)}{\Gamma(t+b)\Gamma(\nicefrac12)}
\times\frac{1 - e^{-2\pi it}}{2}.$$
\item The other three cases are related to the above three via the identity
(\ref{secondfunction}).
\end{enumerate}
\end{enumerate}

\section{Euler kernels}\label{Euler}
Let $\beta$ be the triple of hypergeometric parameters and $\gamma$ the shift vector as
in the previous section. Suppose also that $z\ne0,1$. We define
$$K(\beta,z,x)=\frac{x^{b-1}(1-x)^{c-b-1}}{(1-zx)^a}.$$
Application of the Pochhammer contour integral then gives us
$$\frac{\Gamma(b)\Gamma(c-b)}{\Gamma(c)}F(\beta\,|\,z).$$
In \cite{Be15} the author considered the $\bbbq(\beta,z)$-vector space of twisted differential forms
generated by the differential forms $K(\beta+\gamma,z,x)dx$ with $\gamma\in\bbbz^3$.
The $\bbbq(\beta,z)$-vector space of twisted exact forms is generated by 
$d(K(\beta+\gamma,z,x))$
with $\gamma\in\bbbz^3$. We denote the quotient space by $H^1_{\rm twist}(\beta\,|\,z)$. 
In \cite[Thm 6.1]{Be15} it is shown, under the assumption $\beta$ is non-resonant,
that this space is two dimensional with basis
$K(\beta,z,x)dx$ and $K(\beta+(1,1,1),z,x)dx$. Notice that 
$$aK(\beta+(1,1,1),z,x)=\frac{\partial}{\partial z}K(\beta,z,x).$$
Define the hypergeometric operator
$${\cal L}=z(z-1)\frac{\partial^2}{\partial z^2}+((a+b+1)z-c)\frac{\partial}
{\partial z}+ab.$$
We find that
${\cal L}(K(\beta,z,x))=0$ in $H^1_{\rm twist}(\beta\,|\,z)$. 
Since ${\cal L}$ commutes with the application of the Pochhammer contour,
and Pochhammer integration is zero on exact forms, we
recover the hypergeometric equation for $F(a,b,c\,|\,z)$.  

Let $(\beta,z_0)$ be an admissible quadruple with respect to $\gamma$ and suppose
it is non-resonant. Let $M$ be the field $M=\bbbq(\beta,z_0)$.
Define
$$\hat{R}(t)=\frac{(b+lt)_l(c-b+(m-l)t)_{m-l}}{(c+mt)_m}
R_\gamma(\beta+t\gamma,z_0).$$
Here $(x)_n=x(x+1)\cdots(x+n-1)$ 
if $n\ge0$ and $(x)_n=\frac{1}{(x-1)\cdots(x-\,|\,n\,|\,)}$ if $n<0$.
We can rewrite equation (\ref{functional}) in terms of Euler kernels as
\begin{equation}\label{functionalDR}
K(\beta+(t+1)\gamma,z_0,x)dx\is\hat{R}(t)K(\beta+t\gamma,z_0,x)dx
\end{equation}
in $H^1_{\rm twist}(\beta+t\gamma\,|\,z_0)$.
Define also
$$
g_\gamma(z,x)=\frac{x^l(1-x)^{m-l}}{(1-zx)^k}.
$$
Then, (\ref{functionalDR}) amounts to the statement that there
exists $W(t,x)\in M(t,x)$ such that
\begin{equation}\label{specialcontiguity}
g_\gamma(z_0,x)K(\beta+t\gamma,z_0,x)=\hat{R}(t)K(\beta+t\gamma,z_0,x)+\frac{\partial}
{\partial x}\left(W(t,x)K(\beta+t\gamma,z_0,x)\right).
\end{equation}

Define the denominator $d_\gamma(x)$ of $g_\gamma(z_0,x)$ by
$$
d_\gamma(x)=x^{(-l)^+}(1-x)^{(l-m)^+}(1-z_0x)^{k^+},$$
where $x^+=\max(0,x)$. The numerator $n_\gamma(x)$ is defined by $d_\gamma(x)
 g_\gamma(z_0,x)$.

Let us write
$$
W(t,x)=\frac{p(t,x)}{d_\gamma(x)} x(1-x)(1-z_0x),$$
where $p(t,x)$ is another rational function which will turn out to
be a polynomial in $x$. Then, after multiplication by $d_\gamma(x)$ and
division by $K(x)$, (\ref{specialcontiguity}) can be rewritten as
\begin{equation}\label{specialcontiguity2}
n_\gamma(x)-\hat{R}(t)d_\gamma(x)=\frac{\partial}{\partial x}\left(
p(t,x)x(1-x)(1-z_0x)\right)+q(t,x)p(t,x),
\end{equation}
where
$$
q(t,x):=x(1-x)(1-z_0x)\left(\frac{b-1+lt}{x}+\frac{c-b-1+(m-l)t}{x-1}
-\frac{a+kt}{x-1/z_0}-
\frac{1}{d_\gamma}\frac{\partial d_\gamma}{\partial x}\right).$$
This is the log-derivative of $K(\beta+t\gamma,z_0,x)/d_\gamma(x)$
times $x(1-x)(1-z_0x)$. Note that $q(t,x)$
is a polynomial in $x$ of degree at most $2$ and linear in $t$.
The coefficient of $x^2$ reads $z(c-a-2+t(m-k)-\deg_x(d_\gamma))$, which is non-zero as
a result of our non-resonance condition. Therefore $q(t,x)$ has degree $2$ in $x$.
The non-resonance condition also sees to it that $q(t,x)$ has no zeros in $\{0,1,1/z_0\}$.

We shall write $q(t,x)=q_1(x)+tq_0(x)$. Notice that 
\begin{equation}\label{def:q0}
q_0(x)=x(1-x)(1-z_0x)\frac{g_\gamma'(z_0,x)}{g_\gamma(z_0,x)}
=(m-k)z_0x^2+((k-l)z_0-m)x+l.
\end{equation}
In particular, $q_0(x)$ is non-trivial.

It follows from (\ref{specialcontiguity2}) that $p(t,x)$ has no poles
outside $x=0,1,1/z$. Suppose it has a pole of order $\delta>0$ at $x=0$. Then, looking
at the order $\delta$ poles on both sides of (\ref{specialcontiguity2}) we get
$0=(1-\delta)+(b+tl-1-n),$
where $n$ is the pole order at $x=0$ of $g_\gamma$. This implies $b+lt\in\bbbz$, contradicting
our non-resonance condition. Similarly we show that $p(t,x)$ has no poles in $x=1,1/z$.
Hence $p(t,x)$ is a polynomial in $x$. Its degree in $x$ turns out to be at most
$\max(\deg_x(d_\gamma),\deg_x(n_\gamma))-2$. For the latter fact we use the condition
$c-a+t(m-k)\not\in\bbbz$. 

We may interpret equation (\ref{specialcontiguity2}) as a system of
linear equations in the unknown coefficients of $p(t,x)\in M(t)[x]$,
and the unknown $\hat{R}(t)$. 
Suppose $p(t,x)$ and $\hat{R}(t)$ are a non-trivial solution of 
(\ref{specialcontiguity2}). 
Let $u$ be the degree of $\hat{R}(t)$ in $t$
We take $u=-\infty$ 
if $\hat{R}$ is identically zero. 
Since $q(t,x)$ has degree one in $t$, the degree in $t$ of the
second term in the right hand side of 
(\ref{specialcontiguity2}) is strictly larger than the degree in $t$ of the first term in the right hand side.
In particular, if $u < 0$, then taking the limit of  (\ref{specialcontiguity2}) as $t \rightarrow \infty$ we obtain that
$$
n_\gamma(x)=
x(1-x)(1-z_0x)\left(\frac{l}{x}-\frac{(m-l)}{1-x}
+\frac{kz_0}{1-z_0x}\right)
\lim_{t \rightarrow \infty} t\,p(t,x).
$$
The factor  before $\lim_{t\to\infty}$ on the right hand side
is the non-trivial polynomial $q_0(x)$ of degree $\le2$. If it has a zero at $x=0$, then $l=0$.
But that contradicts $x$ dividing $n_\gamma(x)$. Similarly $1$ and $1/z_0$ cannot be zeros
of $q_0(x)$. Suppose that $q_0(x)$ has degree $<2$. That would mean $k-m=0$ and the
degrees of numerator and denominator of $g_\gamma(z_0,x)$ would be the same.
Then, $\deg_x(p(t,x))\le\deg_x(n_\gamma(x))-2$ implies that
$\deg_x(q_0(x)p(t,x))<\deg_x(n_\gamma(x))$,
which again gives a contradiction. We conclude that $u\ge0$.
 
Assume now that $u > 0$ and define $\hat{R}_0=\lim_{t\to\infty}t^{-u}\hat{R}(t)$.
Let us multiply (\ref{specialcontiguity2}) by $t^{-u}$ and then take the
limit as $t\to\infty$.
We obtain that
$$
-\hat{R}_0\, d_\gamma(x)=q_0(x)\Big(\lim_{t\rightarrow \infty}t^{1-u}p(t,x)\Big).
$$
Just as in the previous case, we arrive at a contradiction.

Hence, we conclude that $u = 0$.
Notice that, since the right hand side converges to a non-trivial polynomial,
the degree of $p$ in $t$ has to be $-1$. In particular, we have completed the
proof of the following statement.

\begin{lemma}
\label{lemma:DegreesInt}
If $\beta+t\gamma$ is non-resonant and $p(t,x)$ and $\hat R(t)$ is
a solution to (\ref{specialcontiguity2}), then $\hat R(t)$
has degree $0$ in $t$, and $p(t,x)$ has degree $-1$ in $t$.
\end{lemma}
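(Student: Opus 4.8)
The plan is to extract everything from the single identity (\ref{specialcontiguity2}), read as an equality in $M(t)[x]$, by comparing the behaviour of its two sides as $t\to\infty$. Throughout, ``degree in $t$'' means the degree of a rational function of $t$, i.e. numerator degree minus denominator degree, since $\hat{R}(t)\in M(t)$ and the $x$-coefficients of $p(t,x)$ lie in $M(t)$. Set $u=\deg_t\hat{R}(t)$, with $u=-\infty$ when $\hat{R}\equiv0$. Because $n_\gamma(x)$ and $d_\gamma(x)$ are free of $t$ and $d_\gamma\not\equiv0$, the left-hand side of (\ref{specialcontiguity2}) has $t$-degree exactly $u$. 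The whole statement reduces to proving $u=0$; the degree of $p$ in $t$ will then drop out.

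The structural fact I would lean on is that $q(t,x)=q_1(x)+t\,q_0(x)$ is linear in $t$ with $q_0\not\equiv0$, so the term $q(t,x)p(t,x)$ has $t$-degree exactly $1+\deg_t p$, strictly larger than the $t$-degree $\deg_t p$ of the derivative term $\frac{\partial}{\partial x}\big(p(t,x)\,x(1-x)(1-z_0x)\big)$, the cubic being $t$-free. To rule out $u<0$, note that then the left-hand side tends to $n_\gamma(x)$ as $t\to\infty$, and balancing the top $t$-power on the right forces $n_\gamma(x)=q_0(x)\,\rho(x)$ with $\rho(x)=\lim_{t\to\infty}t\,p(t,x)$. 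I would then use the explicit factorisation $n_\gamma(x)=x^{l^+}(1-x)^{(m-l)^+}(1-z_0x)^{(-k)^+}$, so that every root of $q_0$ (which divides $n_\gamma$) lies in $\{0,1,1/z_0\}$. But $q_0(0)=l$, $q_0(1)=-(m-l)(1-z_0)$ and $q_0(1/z_0)=k(z_0-1)/z_0$, so (using $z_0\ne0,1$) a root of $q_0$ at $0$, $1$ or $1/z_0$ forces $l=0$, $m-l=0$ or $k=0$, each of which deletes the corresponding factor from $n_\gamma$ and is therefore excluded. Hence $q_0$ has no roots and is constant, contradicting $\deg_x q_0=2$ when $m\ne k$; and when $m=k$ the degrees of $n_\gamma$ and $d_\gamma$ coincide, so the bound $\deg_x p\le\deg_x n_\gamma-2$ gives $\deg_x(q_0\rho)<\deg_x n_\gamma$, again a contradiction. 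Thus $u\ge0$.

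The case $u>0$ is handled identically after multiplying (\ref{specialcontiguity2}) by $t^{-u}$ and letting $t\to\infty$: writing $\hat{R}_0=\lim_{t\to\infty}t^{-u}\hat{R}(t)\ne0$ one obtains $-\hat{R}_0\,d_\gamma(x)=q_0(x)\,\sigma(x)$ for a polynomial $\sigma$, and the same zero- and degree-count applied to $d_\gamma(x)=x^{(-l)^+}(1-x)^{(l-m)^+}(1-z_0x)^{k^+}$ yields a contradiction. Hence $u=0$. Finally, with $u=0$ the left-hand side converges as $t\to\infty$ to $n_\gamma-\hat{R}_\infty d_\gamma$, which is nonzero because $g_\gamma(z_0,x)=n_\gamma/d_\gamma$ is nonconstant in $x$ for $\gamma\ne0$; matching this nonzero limit against the top $t$-power of the right-hand side forces $1+\deg_t p=0$, i.e. $\deg_t p=-1$. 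This gives both assertions.

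I expect the main obstacle to be precisely the zero- and degree-bookkeeping for $q_0$, $n_\gamma$ and $d_\gamma$, since this is where the non-resonance hypothesis is spent in full: it is needed to know $q_0\not\equiv0$, to know that $q_0$ vanishes at $0,1,1/z_0$ only in the degenerate directions of $\gamma$, and to secure the bound $\deg_x p\le\max(\deg_x d_\gamma,\deg_x n_\gamma)-2$ (which itself rests on $c-a+t(m-k)\notin\bbbz$). By contrast the three limit computations are routine once the strict inequality $\deg_t\big(q\,p\big)>\deg_t\big(\frac{\partial}{\partial x}(\cdots)\big)$ is in place; the real content lies in checking that each degenerate configuration of $\gamma$ is incompatible with the factorisations of $n_\gamma$ and $d_\gamma$.
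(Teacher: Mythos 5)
Your argument is correct and is essentially the paper's own proof: the same trichotomy on $u=\deg_t\hat R$, the same $t\to\infty$ limits exploiting that $q(t,x)p(t,x)$ dominates the derivative term by one degree in $t$, the same exclusion of roots of $q_0$ at $0,1,1/z_0$ via the factorisations of $n_\gamma$ and $d_\gamma$, and the same degree count when $k=m$. (Your explicit values $q_0(0)=l$, $q_0(1)=-(m-l)(1-z_0)$, $q_0(1/z_0)=k(z_0-1)/z_0$ check out; only the passing claim that the left-hand side has $t$-degree exactly $u$ is off when $u<0$, but you never use it in that case.)
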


Let us now summarize our conclusion. We take the point of view that
if $\deg_x(q_0)<2$, then we say that $q_0$ has a zero at $x=\infty$.
In particular, if $q_0$ is constant we say that $q_0$ has a double zero
at $\infty$.

\begin{theorem}\label{belyi}
Let $\beta+t\gamma,z_0$ with $\gamma=(k,l,m)$ be a non-resonant admissible
quadruple. Let $x_1,x_2$ be the zeros of $q_0(x)$,
as defined in (\ref{def:q0}). Then, $z_0$ has the
property that $g_\gamma(z_0,x_1)=g_\gamma(z_0,x_2)$ if $x_1\ne x_2$ 
and $g_\gamma'(x_1)=0$ if $x_1=x_2$.
Moreover, the limit $\hat{R}_0:=\lim_{t\to\infty}\hat R(t)$
is non-zero and given by $g_\gamma(z_0,x_1)$. Consequently, the factor $R_0$
in Theorem \ref{main} is given by $\frac{m^m}{l^l(m-l)^{m-l}}g_\gamma(z_0,x_1)$. 
\end{theorem}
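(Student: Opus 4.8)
The plan is to extract from the analysis preceding Lemma~\ref{lemma:DegreesInt} a single polynomial identity in $x$, obtained by letting $t\to\infty$ in \eqref{specialcontiguity2}, and then to read off every assertion of the theorem by evaluating this identity at the zeros of $q_0$. First I would record the consequences of Lemma~\ref{lemma:DegreesInt}: since $\hat{R}(t)$ has degree $0$ in $t$ it is a nonzero constant $\hat{R}_0=\lim_{t\to\infty}\hat{R}(t)$, and since $p(t,x)$ has degree $-1$ in $t$ the limit $P(x):=\lim_{t\to\infty}t\,p(t,x)$ exists and is a (nonzero) polynomial in $x$. Writing $q(t,x)=q_1(x)+t\,q_0(x)$ and letting $t\to\infty$ in \eqref{specialcontiguity2}, the term $\frac{\partial}{\partial x}\big(p\,x(1-x)(1-z_0x)\big)$ and the term $q_1(x)p(t,x)$ vanish in the limit (both carry a factor $p=O(1/t)$), while $t\,q_0(x)p(t,x)\to q_0(x)P(x)$. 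This yields the key identity
\begin{equation}\label{eq:keyplan}
n_\gamma(x)-\hat{R}_0\,d_\gamma(x)=q_0(x)\,P(x),
\end{equation}
an equality of polynomials in $x$ with $\hat{R}_0\in\bbbc^\times$; the non-vanishing of $\hat{R}_0$ is immediate from $\deg_t\hat{R}=0\neq-\infty$.

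Next I would evaluate \eqref{eq:keyplan} at a zero $x_i$ of $q_0$. The right-hand side vanishes, so $n_\gamma(x_i)=\hat{R}_0\,d_\gamma(x_i)$, whence $\hat{R}_0=n_\gamma(x_i)/d_\gamma(x_i)=g_\gamma(z_0,x_i)$, provided $d_\gamma(x_i)\neq0$. To secure the latter I would check that a zero of $q_0$ can never be a genuine pole of $g_\gamma$: the only possible zeros of $d_\gamma$ lie in $\{0,1,1/z_0\}$, and a short computation gives $q_0(0)=l$, $q_0(1)=-(1-z_0)(m-l)$, and $q_0(1/z_0)=k(z_0-1)/z_0$. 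Hence if $x_i$ equals $0$, $1$, or $1/z_0$, then respectively $l=0$, $m-l=0$, or $k=0$ (using $z_0\neq0,1$), so that the corresponding exponent $(-l)^+$, $(l-m)^+$, or $k^+$ in $d_\gamma$ already vanishes and $d_\gamma(x_i)\neq0$. Applying this to both zeros $x_1,x_2$ gives $g_\gamma(z_0,x_1)=\hat{R}_0=g_\gamma(z_0,x_2)$, which is the first assertion when $x_1\neq x_2$.

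For the double-zero case $x_1=x_2$ I would also differentiate \eqref{eq:keyplan}. Since then $q_0(x_1)=q_0'(x_1)=0$, I obtain in addition $n_\gamma'(x_1)=\hat{R}_0\,d_\gamma'(x_1)$, and combining the two relations gives $g_\gamma'(z_0,x_1)=\big(n_\gamma'd_\gamma-n_\gamma d_\gamma'\big)/d_\gamma^2\big|_{x_1}=0$, as required. The cases where a zero sits at $x=\infty$ (that is, $\deg_x q_0<2$, which the degree-$2$ coefficient $z_0(m-k)$ forces to mean $m=k$, the constant case being the double zero at $\infty$) are treated the same way by comparing the leading behaviour of both sides of \eqref{eq:keyplan} as $x\to\infty$, using that $g_\gamma(z_0,x)$ then tends to the finite constant $(-1)^{m-l}/(-z_0)^k$. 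Finally, to pass from $\hat{R}_0$ to the constant $R_0$ of Theorem~\ref{main}, I would take $t\to\infty$ in the definition of $\hat{R}(t)$. Since $R_\gamma(\beta+t\gamma,z_0)=R_0\prod_j\frac{t+\alpha_j}{t+\delta_j}\to R_0$, and since the leading asymptotics $(b+lt)_l\sim l^l t^l$, $(c-b+(m-l)t)_{m-l}\sim(m-l)^{m-l}t^{m-l}$, and $(c+mt)_m\sim m^m t^m$ make all powers of $t$ cancel, the Pochhammer ratio tends to $\frac{l^l(m-l)^{m-l}}{m^m}$. Hence $\hat{R}_0=\frac{l^l(m-l)^{m-l}}{m^m}R_0$, giving $R_0=\frac{m^m}{l^l(m-l)^{m-l}}\,g_\gamma(z_0,x_1)$.

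I expect the main obstacle to be bookkeeping of the degenerate configurations rather than the central argument: giving a uniform and convention-consistent treatment of zeros located at $x=\infty$ (including the doubly-degenerate constant case) and confirming in every case that the two zeros of $q_0$ avoid the genuine poles of $g_\gamma$. A secondary technical point is justifying the Pochhammer asymptotics $(x)_n\sim(\text{leading})\,t^{n}$ uniformly in the sign of $n$, which requires invoking the definition $(x)_n=1/\big((x-1)\cdots(x-|n|)\big)$ for $n<0$ so that the cancellation of powers of $t$ in the last step holds for all sign patterns of $l$ and $m-l$.
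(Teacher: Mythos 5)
Your proof is correct and follows essentially the same route as the paper, which obtains Theorem~\ref{belyi} as a direct consequence of the $t\to\infty$ limit of (\ref{specialcontiguity2}) combined with Lemma~\ref{lemma:DegreesInt}, evaluated at the zeros of $q_0$. You in fact supply several details the paper leaves implicit: the check that the zeros of $q_0$ avoid the zeros of $d_\gamma$, the treatment of the double-zero and zero-at-infinity cases, and the Pochhammer asymptotics converting $\hat R_0$ into the stated formula for $R_0$.
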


\begin{remark}
When none of $k,l,m-k,m-l$ is zero, the zeros $x_1,x_2$ are distinct from
$0,1,\infty$. The condition $g_\gamma(z_0,x_1)=g_\gamma(z_0,x_2)$ is simply
the requirement that $g_\gamma(z_0,x)$ is a Belyi map. By that we mean a rational
function such that the set of images of its ramification points consists of
at most three points in $\bbbp^1$. 

When one of $k,l,m-k,m-l$ is zero, $g_\gamma$
is automatically a Belyi map, but the condition $g_\gamma(z_0,x_1)=g_\gamma(z_0,x_2)$
still gives a finite number of possibilities for $z_0$.
\end{remark}

\begin{example}
Let us consider the example $\gamma=(2,2,1)$. Then, $g_\gamma(z_0,x)$ is a Belyi map
if and only if $z_0=-\nicefrac18$. There is one non-trivial ramification point of $g_\gamma(-\nicefrac18,x)$,
which is $x=4$. Then, 
%$\frac{1^1}{2^2(-1)^(-1)}g_\gamma(-\nicefrac18,4)=16/27$, as desired. 
$g_\gamma(-\nicefrac18,4)=64/27$, as desired.
\end{example}

\section{Kummer's list}
Let $x \mapsto g(x)$ be a fractional linear transformation in $x$ that
permutes the points $0,1,\infty$. Then, the substitution  $x \mapsto g(x)$ in $K(a,b,c,z,x)dx$,
yields as a result another Euler kernel. For example,
\[
\renewcommand\arraystretch{1.2}
\begin{array}{lll}
g_1(x)=1/x&\quad \mbox{gives}\quad& z^{-a}K(a,a+1-c,a+1-b,1/z,x)dx\\
g_2(x)=1-x&\quad \mbox{gives}\quad& (1-z)^{-a}K(a,c-b,c,z/(z-1),x)dx\\
g_3(x)=x/(x-1)&\quad \mbox{gives}\quad& K(a,b,a+b+1-c,1-z,x)dx\\
g_4(x)=1-1/x&\quad \mbox{gives}\quad& z^{-a} K(a,a+1-c,a+b+1-c,1-1/z,x)dx\\
g_5(x)=1/(1-x)&\quad \mbox{gives}\quad& (1-z)^{-a} K(a,c-b,a+1-b,1/(1-z),x)dx.
\end{array}
\]
We can also consider linear fractional transformations in $x$ that permute the four points
$0,1,\infty,1/z$. These permutations are products of 2-cycles. Up to a constant factor,
\[
\renewcommand\arraystretch{1.2}
\begin{array}{lll}
g_6(x)=1/zx&\quad \mbox{gives}\quad& z^{1-c}K(b+1-c,a+1-c,2-c,z,x)dx\\
g_7(x)=(x-1/z)/(x-1)&\quad \mbox{gives}\quad& z^{1-c}(1-z)^{c-a-b}K(1-b,1-a,2-c,z,x)dx\\
g_8(x)=(1-x)/(1-zx)&\quad \mbox{gives}\quad& (1-z)^{c-a-b}K(c-a,c-b,c,z,x)dx.
\end{array}
\]
Together with the additional substitutions given by $g_i\circ g_j$ for $i=1,\ldots,5$ and $j=6,7,8$ we
get $24$ forms of the shape $\lambda(z)K(a',b',c',h(z),x)dx$. 

Consider the example given by $g_1(x) = 1/x$, which 
changed $K(a,b,c,z,x)dx$ into $z^{-a}K(a,a+1-c,a+1-b,1/z,x)dx$. 
The application of ${\cal L}$ to this form vanishes in $H^1_{\rm twist}$.
As application of the Pochhammer contour yields 
$$z^{-a}F(a,a+1-c,a+1-b\,|\,1/z),$$
the latter is also a solution to the hypergeometric equation. In this way the $24$ forms
obtained from the $24$ rational linear transformations are related to the $24$ Kummer solutions;
the entire list can be seen in Table~\ref{table:Kummer}.

\begin{table}[t]
\begin{center}
\begin{tabular}{cccccc}
$\lambda(z)$ & $h(z)$ & $a'$ & $b'$ & $c'$ & permutation\\
\hline
$1$ & $z$ & $a$ & $b$ & $c$ & $(1)$\\
$(1-z)^{c-a-b}$ & $z$ & $c-a$ & $c-b$ & $c$ & $(13)(24)$\\
$z^{1-c}$ & $z$ & $b-c+1$ & $a-c+1$ & $2-c$ & $(14)(23)$\\
$z^{1-c}(1-z)^{c-a-b}$ & $z$ & $1-b$ & $1-a$ & $2-c$ & $(12)(34)$\\
$z^{-a}$ & $\nicefrac{1}{z}$ & $a$ & $a-c+1$ & $a-b+1$ & $(23)$\\
$z^{-b}$ & $\nicefrac{1}{z}$ & $b-c+1$ & $b$ & $b-a+1$ & $(14)$\\
$z^{b-c}(1-z)^{c-a-b}$ & $\nicefrac{1}{z}$ & $1-b$ & $c-b$ & $a-b+1$ & $(1342)$\\
$z^{a-c}(1-z)^{c-a-b}$ & $\nicefrac{1}{z}$ & $c-a$ & $1-a$ & $b-a+1$ & $(1243)$\\
$1$ & $1-z$ & $a$ & $b$ & $a+b-c+1$ & $(34)$\\
$z^{1-c}(1-z)^{c-a-b}$ & $1-z$ & $1-b$ & $1-a$ & $c-a-b+1$ & $(12)$\\
$z^{1-c}$ & $1-z$ & $b-c+1$ & $a-c+1$ & $a+b-c+1$ & $(1324)$\\
$(1-z)^{c-a-b}$ & $1-z$ & $c-a$ & $c-b$ & $c-a-b+1$ & $(1423)$\\
$(1-z)^{-a}$ & $\nicefrac{z}{z-1}$ & $a$ & $c-b$ & $c$ & $(24)$\\
$(1-z)^{-b}$ & $\nicefrac{z}{z-1}$ & $c-a$ & $b$ & $c$ & $(13)$\\
$z^{1-c}(1-z)^{c-a-1}$ & $\nicefrac{z}{z-1}$ & $1-b$ & $a-c+1$ & $2-c$ & $(1432)$\\
$z^{1-c}(1-z)^{c-b-1}$ & $\nicefrac{z}{z-1}$ & $b-c+1$ & $1-a$ & $2-c$ & $(1234)$\\
$z^{-a}$ & $1-\nicefrac{1}{z}$ & $a$ & $a-c+1$ & $a+b-c+1$ & $(243)$\\
$z^{-b}$ & $1-\nicefrac{1}{z}$ & $b-c+1$ & $b$ & $a+b-c+1$ & $(134)$\\
$z^{a-c}(1-z)^{c-a-b}$ & $1-\nicefrac{1}{z}$ & $c-a$ & $1-a$ & $c-a-b+1$ & $(123)$\\
$z^{b-c}(1-z)^{c-a-b}$ & $1-\nicefrac{1}{z}$ & $1-b$ & $c-b$ & $c-a-b+1$ & $(142)$\\
$z^{1-c}(1-z)^{c-a-1}$ & $\nicefrac{1}{1-z}$ & $1-b$ & $a-c+1$ & $a-b+1$ & $(132)$\\
$(1-z)^{-a}$ & $\nicefrac{1}{1-z}$ & $a$ & $c-b$ & $a-b+1$ & $(234)$\\
$z^{1-c}(1-z)^{c-b-1}$ & $\nicefrac{1}{1-z}$ & $b-c+1$ & $1-a$ & $b-a+1$ & $(124)$\\
$(1-z)^{-b}$ & $\nicefrac{1}{1-z}$ & $c-a$ & $b$ & $b-a+1$ & $(143)$
\end{tabular}
\caption{Kummer's 24 transformations}
\label{table:Kummer}
\end{center}
\end{table}

The last column in Table~\ref{table:Kummer} consists of permutations of $S_4$ in cycle notation. They have
the following meaning. To every triple $a,b,c$ we form the 4-vector
\begin{equation}\label{4vector}
(a-\half,-b+\half,c-a-\half,b-c+\half).
\end{equation}
It turns out that the coordinates of these 4-vectors are permutations of each other
and that every permuation occurs precisely once.

As an example, the fifth entry in Table~\ref{table:Kummer} corresponds to our
example $g_1(x)=1/x$. Notice that
$$(a'-\half,-b'+\half,c'-a'-\half,b'-c'+\half)=(a-\half,c-a-\half,-b+\half,b-c+\half).$$
The latter 4-tuple is the same as (\ref{4vector}) except that the second and third entry
are exchanged. This explains the permutation $(23)$ in the last column of 
Table~\ref{table:Kummer}. 

In \cite[Prop 2.3]{ebisu} we find that to every admissible quadruple there correspond 23 other
admissible quadruples, but with different shift vectors $\gamma=(k,l,m)$. From the above
considerations we can see that they arise from the 24 transforms of the Euler kernel.

Consider the equality (\ref{functionalDR}) which abbreviates as
$$K(\beta+(t+1)\gamma,z_0,x)dx\is\hat{R}(t)K(\beta+t\gamma,z_0,x)dx.$$

Apply any one of the 24 permutation actions of Table~\ref{table:Kummer} to this equality. For
example the permutation $(34)$. We then get the equality
$$K(\beta'+(t+1)\gamma',1-z,x)dx=\hat R(t)K(\beta'+t\gamma',1-z,x)dx,$$
where $\beta'=(a,b,a+b+1-c)$ and $\gamma'=(k,l,k+l-m)$.
Then, $(\beta',z_0)$ is an admissible quadruple with respect to the shift vector $\gamma'$.
Application of the permutation $(1234)$ gives us

\[
K\Big(\beta''+(t+1)\gamma'', \frac{z_0}{z_0-1},x\Big)dx =z_0^{m}(1-z_0)^{l-m}\,\hat R(t)\,K\Big(\beta''+t\gamma'',\frac{z_0}{z_0-1},x\Big)dx,
\]
where $\beta''=(b-c+1,1-a,2-c)$ and $\gamma''=(l-m,-k,-m)$.
Hence we find that $(\beta'',z_0)$ is an admissible quadruple with respect to $\gamma''$.
Since $(34)$ and $(1234)$ generate $S_4$ we find that the transformations $k,l,m\to k,l,k+l-m$
and $k,l,m\to l-m,-k,-m$ generate a group of order $24$ transformations which give us the
$24$ integer triples that occur in Ebisu's \cite[Prop 2.3]{ebisu}.
We can find a fundamental domain for these 24 transformations by requiring that
$k\ge m-k\ge l-m\ge -l$, hence $m\le 2k,2l$ and $k+l\le 2m$. In particular this implies
that $k,l,m\ge0$. Note that this choice differs from Ebisu's normalization $0\le k+l-m\le
l-k\le m$, see \cite[(1.14)]{ebisu}.

\section{Sample $\Gamma$-evaluations}
In this section we collect some examples of $\Gamma$-evaluations related to
non-resonant admissable quadruples. For the resonant cases we refer to Section
\ref{resonant}. Notice that even if one of $k,l,m-k,m-l$ is zero,
one may still have a non-resonant quadruple. Below we find several such examples.
Each entry is preceded by the corresponding shift vector $k,l,m$. In some cases
it may happen that the $c$-parameter tends to a negative integer when the $a$ or $b$
parameter does (see the final remarks in the introduction). In that case we also mention
what the polynomial interpretation gives as value.

\subsection*{The shift $\gamma = (1,3,2)$}
$$F(t,3t-1,2t\,|\,e^{\pi i/3})=-\frac{\sqrt{3}}{2}e^{\pi i(\nicefrac t2+\nicefrac5/6)}\Big(\frac4{\sqrt{27}}\Big)^t
\frac{\Gamma(t+\nicefrac12)\Gamma(\nicefrac13)}{\Gamma(t+\nicefrac13)\Gamma(\nicefrac12)}.
$$
This is the example that was mentioned on page \pageref{anexample}. 
If $t\in\bbbz_{\le 0}$ and the left hand side is considered as finite sum, then the
constant $-\sqrt{3}e^{\pi 5i/3}/2$ must be dropped.

\subsection*{The shift $\gamma = (2,4,4)$}
$$F(2t,4t-\nicefrac12,4t\,|\,-2+\sqrt{8})=\frac{1}
{\sqrt{2}}(1+\sqrt{2})^{4t}
\frac{\Gamma(t+\nicefrac14)\Gamma(t+\nicefrac34)\Gamma(\nicefrac38)\Gamma(\nicefrac58)}
{\Gamma(t+\nicefrac38)\Gamma(t+\nicefrac58)\Gamma(\nicefrac14)\Gamma(\nicefrac34)}.$$
When $2t\in\bbbz_{\le0}$ and the left hand is considered as polynomial, the 
right hand side must be multiplied by $(-1)^{2t}\sqrt{2}$.
\subsection*{The shift $\gamma =(-1,2,1)$}
\begin{align*}
F(-t,2t+1,t+\nicefrac43\,|\,\nicefrac19)
&=\left(\frac34\right)^t\frac{\Gamma(\nicefrac76)\Gamma(t+\nicefrac43)}
{\Gamma(\nicefrac43)\Gamma(t+\nicefrac76)}.\\
F(-t,2t+2,t+\nicefrac53\,|\,\nicefrac19)& =\left(\frac34\right)^t\frac{\Gamma(\nicefrac32)
\Gamma(t+\nicefrac53)}{\Gamma(\nicefrac53)\Gamma(t+\nicefrac32)}.
\end{align*}

\subsection*{The shift $\gamma =(-2,4,2)$}
Let $z_0=(3+2\sqrt{3})/9$ and $z_1=(3-2\sqrt{3})/9$
\begin{align*}
F(-2t,4t+1,2t+\nicefrac43\,|\,z_0) &=
\left(\frac{-27z_1}{16}\right)^t\frac{\cos(\pi(t-\nicefrac1{12}))}{\cos(\pi/12)}
\frac{\Gamma(t+\nicefrac23)\Gamma(t+\nicefrac76)\Gamma(\nicefrac34)\Gamma(\nicefrac{13}{12})}
{\Gamma(t+\nicefrac34)\Gamma(t+\nicefrac{13}{12})\Gamma(\nicefrac23)\Gamma(\nicefrac76)}.\\
F(-2t,4t+1,2t+\nicefrac43\,|\,z_1) &=
\left(\frac{27z_0}{16}\right)^t
\frac{\Gamma(t+\nicefrac23)\Gamma(t+\nicefrac76)\Gamma(\nicefrac34)\Gamma(\nicefrac{13}{12})}
{\Gamma(t+\nicefrac34)\Gamma(t+\nicefrac{13}{12})\Gamma(\nicefrac23)\Gamma(\nicefrac76)}.\\
F(-2t,4t+2,2t+\nicefrac53\,|\,z_0) &=
\left(\frac{-27z_1}{16}\right)^t\frac{\cos(\pi(t+\nicefrac1{12}))}{\cos(\pi/12)}
\frac{\Gamma(t+\nicefrac43)\Gamma(t+\nicefrac56)\Gamma(\nicefrac54)\Gamma(\nicefrac{11}{12})}
{\Gamma(t+\nicefrac54)\Gamma(t+\nicefrac{11}{12})\Gamma(\nicefrac43)\Gamma(\nicefrac56)}.\\
F(-2t,4t+2,2t+\nicefrac53\,|\,z_1) &=
\left(\frac{27z_0}{16}\right)^t
\frac{\Gamma(t+\nicefrac43)\Gamma(t+\nicefrac56)\Gamma(\nicefrac54)\Gamma(\nicefrac{11}{12})}
{\Gamma(t+\nicefrac54)\Gamma(t+\nicefrac{11}{12})\Gamma(\nicefrac43)\Gamma(\nicefrac56)}.
\end{align*}

\subsection*{The shift $\gamma =(-1,-1,1)$}
$$F(-t,-t+\nicefrac13,t+\nicefrac43\,|\,-\nicefrac18)=\left(\frac{27}{32}\right)^t
\frac{\Gamma(t+\nicefrac43)\Gamma(\nicefrac76)}
{\Gamma(t+\nicefrac76)\Gamma(\nicefrac43)}.
$$

\subsection*{The shift $\gamma =(-2,-2,2)$}
Let $z_0=(3\sqrt{3}-5)/4$
$$
F(-2t,-2t+\nicefrac13,2t+\nicefrac43\,|\,z_0)=
\left(\frac{81\sqrt{3}}{128}\right)^t
\frac{\Gamma(t+\nicefrac23)\Gamma(t+\nicefrac76)\Gamma(\nicefrac34)\Gamma(\nicefrac{13}{12})}
{\Gamma(t+\nicefrac34)\Gamma(t+\nicefrac{13}{12})\Gamma(\nicefrac23)\Gamma(\nicefrac76)}.
$$

\subsection*{The shift $\gamma =(0,1,3)$}

$$F(\nicefrac12,t,3t-1\,|\,\nicefrac34)=\frac{1}{3}\frac{\Gamma(t+\nicefrac13)\Gamma(t-\nicefrac13)\Gamma(\nicefrac16)\Gamma(-\nicefrac16)}
{\Gamma(t+\nicefrac16)\Gamma(t-\nicefrac16)\Gamma(\nicefrac13)\Gamma(-\nicefrac13)}.
$$
When $t\in\bbbz_{\le0}$ and the left hand side is considered polynomial, the factor
$1/3$ should be dropped.

\subsection*{The shift $\gamma =(1,3,1)$}

$$F(t,3t-\nicefrac32,t+\nicefrac12\,|\,4)=\frac{6e^{2\pi it}\cos(\pi t)}{27^t}
\frac{\Gamma(t+\nicefrac12)\Gamma(t-\nicefrac12)}
{\Gamma(t-\nicefrac16)\Gamma(t+\nicefrac16)}.
$$

\subsection*{The shift $\gamma =(-1,3,2)$}
\begin{align*}
F(-t,3t+1,2t+\nicefrac32\,|\,\nicefrac14) &=\left(\frac{16}{27}\right)^t
\frac{\Gamma(t+\nicefrac54)\Gamma(t+\nicefrac34)\Gamma(\nicefrac76)\Gamma(\nicefrac23)}
{\Gamma(t+\nicefrac76)\Gamma(t+\nicefrac23)\Gamma(\nicefrac54)\Gamma(\nicefrac34)}.\\
F(-t,3t+2,2t+\nicefrac94\,|\,-\nicefrac18) &=\left(\frac{32}{27}\right)^t
\frac{\Gamma(t+\nicefrac{13}8)\Gamma(t+\nicefrac98)\Gamma(\nicefrac43)\Gamma(\nicefrac{17}{12})}
{\Gamma(t+\nicefrac43)\Gamma(t+\nicefrac{17}{12})\Gamma(\nicefrac{13}8)\Gamma(\nicefrac98)}.
\end{align*}

\subsection*{The shift $\gamma =(3,3,4)$}
$$F(3t, 3t + \nicefrac12, 4t + \nicefrac23\,|\, \nicefrac89)=108^t
\frac{\Gamma(t+\nicefrac{11}{12})\Gamma(t+\nicefrac5{12})\Gamma(\nicefrac12)\Gamma(\nicefrac56)}
{\Gamma(t+\nicefrac12)\Gamma(t+\nicefrac56)\Gamma(\nicefrac{11}{12})\Gamma(\nicefrac5{12})}
$$
When $t\in -1/6+\bbbz_{\le0}$ and the left hand side is considered polynomial one
must multiply the result by $2$.
When $t\in -2/3+\bbbz_{\le0}$ and the left hand side is considered polynomial one
must multiply the result by $-2$.

$$F(3t, 3t + \nicefrac14, 4t + \nicefrac13\,|\, \nicefrac89)=108^t
\frac{\Gamma(t+\nicefrac{7}{12})\Gamma(t+\nicefrac56)\Gamma(\nicefrac34)\Gamma(\nicefrac23)}
{\Gamma(t+\nicefrac34)\Gamma(t+\nicefrac23)\Gamma(\nicefrac{7}{12})\Gamma(\nicefrac56)}
$$
When $t\in \{-1/12, -1/3\}+\bbbz_{\le0}$ and the left hand side is considered polynomial one
must multiply the right hand side by $108^{\nicefrac{1}{12}}\frac{\Gamma(\nicefrac56)
\Gamma(\nicefrac{7}{12})^2}{\Gamma(\nicefrac12)\Gamma(\nicefrac34)^2}$.

$$F(3t, 3t - \nicefrac12, 4t\,|\, \nicefrac43)=\frac{1}{4}(1-\sqrt{-3})e^{2\pi it}16^t
\frac{\Gamma(t+\nicefrac14)\Gamma(t+\nicefrac34)\Gamma(\nicefrac16)\Gamma(\nicefrac23)}
{\Gamma(t+\nicefrac16)\Gamma(t+\nicefrac23)\Gamma(\nicefrac14)\Gamma(\nicefrac34)}
$$
When $t\in\bbbz_{\le0}$ and the left hand side considered polynomial we must drop the factor $(1-\sqrt{-3})/4$.

\subsection*{The shift $\gamma =(2,1,0)$}
\begin{align*}
F(2t, t + \nicefrac16, \nicefrac23\,|\, -8)&=\frac{2}{\sqrt{3}\cdot 27^t}\sin(\pi(t+\nicefrac13))\\
F(2t, t+\nicefrac13,\nicefrac43\,|\,-8)&=\frac{2\cos(\pi(t+\nicefrac13))}{27^t}\frac{\Gamma(t-\nicefrac16)\Gamma(\nicefrac12)}{\Gamma(t+\nicefrac12)\Gamma(-\nicefrac16)}
\end{align*}

\subsection*{The shift $\gamma =(3,1,0)$}
\begin{align*}
F(3t,t+\nicefrac16,\nicefrac12\,|\,9) &=\frac{1}{2\cdot 64^t}\left(1+e^{2\pi i(t+\nicefrac16)}
-e^{4\pi i(t+\nicefrac16)}\right)\\
F(3t,t+\nicefrac12,\nicefrac32\,|\,9) & =\frac{-1}{6\sqrt{3}\cdot64^t}
(1-e^{2\pi it}+e^{4\pi it})\frac{\Gamma(t-\nicefrac16)\Gamma(t+\nicefrac16)}
{\Gamma(t+\nicefrac13)\Gamma(t+\nicefrac23)}
\end{align*}
The first line is proven in Corollary \ref{example3}. Unfortunately 
the second line cannot be proven in this manner because we do not
having enough special values of $t$ with an elementary
evaluation. The result is a conjecture which was found experimentally.
Furthermore we found
\begin{align*}
F(3t,t+\nicefrac16,\nicefrac12\,|\,-3) & =\frac{1}{16^t}\cos(\pi t)\frac{\Gamma(t+\nicefrac12)\Gamma(\nicefrac13)}
{\Gamma(t+\nicefrac13)\Gamma(\nicefrac12)}\\
F(3t,t+\nicefrac12,\nicefrac32\,|\,-3) & =\frac{2}{16^t}\cos(\pi (t + 1/3))\frac{\Gamma(t-\nicefrac16)\Gamma(\nicefrac23)}
{\Gamma(t+\nicefrac23)\Gamma(-\nicefrac16)}
\end{align*}

\subsection*{The shift $\gamma=(1,1,0)$}
Strictly speaking there is no admissable quadruple with respect to $(1,1,0)$. However,
we do like to recall the following classical identity
$$F(t,t+\half,\half|z^2)=\frac12\left((1+z)^{-2t}+(1-z)^{-2t}\right).$$

\end{document}